\newtheorem{thm}{Theorem}[section]
\newtheorem*{thm*}{Theorem}
\newtheorem{dfn}[thm]{Definition}
\newtheorem{lem}[thm]{Lemma}
\newtheorem*{lem*}{Lemma}
\newtheorem*{sublem*}{Sublemma}
\newtheorem{prop}[thm]{Proposition}
\newtheorem{conj}[thm]{Conjecture}
\theoremstyle{definition}
\def\Z{\mathbf{Z}}
\def\F{\mathbf{F}}
\def\Q{\mathbf{Q}}
\def\C{\mathbf{C}}
\def\a{\alpha}
\def\b{\beta}
\def\s{\sigma}
\def\z{\zeta}
\def\Qcyc{\Q_{\text{cyc}}}
\def\Zpt{\Z_p\llbracket T \rrbracket}
\def\Gal{\text{Gal}}
\DeclareFontFamily{U}{wncy}{}
\DeclareFontShape{U}{wncy}{m}{n}{<->wncyr10}{}
\DeclareSymbolFont{mcy}{U}{wncy}{m}{n}
\DeclareMathSymbol{\Sha}{\mathord}{mcy}{"58}
\def\sse{\subseteq}
\title{The Iwasawa $\mu$-invariants of Elliptic Curves over $\Q$}
\author[]{Adithya Chakravarthy}
\date{}
\begin{document}

	\maketitle

        \begin{abstract}

     In this paper, we discuss a longstanding conjecture of Greenberg in the Iwasawa theory of elliptic curves. Greenberg's conjecture states that if $E/\Q$ is an elliptic curve with good ordinary reduction at $p$, and $E[p]$ is irreducible as a Galois module, then the Selmer group of $E$ over the cyclotomic $\Z_p$ extension of $\Q$ has $\mu$-invariant zero. We prove that if $E$ is an elliptic curve over $\Q$, then we have $\mu \leq 1$ for all but finitely many primes $p$ of good ordinary reduction.    
	\end{abstract}

    \section{Introduction}

    Classical Iwasawa theory studies how class numbers grow in infinite towers of number fields. Suppose we have a tower of number fields 
    \[ K = K_0 \sse K_1 \sse  K_2 \sse \dots \sse K_n \sse \dots  \]
    such that $\Gal(K_n/K)$ is cyclic of order $p^n$ for each $n \geq 1$. Put $K_{\infty} = \cup_{n=1}^{\infty} K_n$. Then $K_{\infty}/K$ is a $\Z_p$-extension: a Galois extension with Galois group isomorphic to the additive group $\Z_p$ of $p$-adic integers. Iwasawa proved the following fundamental theorem about the growth of class numbers in these towers:
    
	\begin{thm*}[Iwasawa]
		Let $K$ be a number field and let $K_{\infty}/K$ be a $\Z_p$ extension with intermediate layers $K_n$.  Suppose that $p^{e_n}$ is the exact power of $p$ dividing the class number of $K_n$. There are integers $\mu, \lambda, \nu$  such that 
		\[ e_n = \mu p^n + \lambda n + \nu \]
		for all $n$ sufficiently large.
	\end{thm*}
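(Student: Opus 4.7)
The plan is to reformulate the growth question as a statement about a single finitely generated module over the Iwasawa algebra $\Lambda = \Z_p[[\Gamma]]$, where $\Gamma = \Gal(K_{\infty}/K)$, and then exploit the structure theory of such modules. Concretely, let $L_n$ be the maximal unramified abelian $p$-extension of $K_n$, put $L_{\infty} = \cup_n L_n$, and set $X = \Gal(L_{\infty}/K_{\infty})$. Class field theory identifies $\Gal(L_n/K_n)$ with the $p$-part $A_n$ of the class group of $K_n$, so $p^{e_n} = |A_n|$. The group $X$ carries a natural continuous action of $\Gamma$ and hence becomes a $\Lambda$-module; via a choice of topological generator $\gamma \in \Gamma$, the standard isomorphism $\Lambda \cong \Zpt$ with $\gamma \mapsto 1+T$ lets us work with power series.

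Next I would prove that $X$ is a finitely generated torsion $\Lambda$-module. Finite generation follows from a version of Nakayama's lemma once one shows $X/TX$ is finite, which in turn comes from comparing $X$ with the class groups $A_n$ at finite level. The torsion property requires a separate argument, using that only finitely many primes of $K$ ramify in $K_{\infty}/K$ and that their inertia groups have open image in $\Gamma$; this forces the rank of $X$ over $\Lambda$ to vanish. I would then invoke the structure theorem for finitely generated torsion $\Lambda$-modules to obtain a pseudo-isomorphism
\[
X \sim \bigoplus_{i} \Lambda/(p^{a_i}) \oplus \bigoplus_{j} \Lambda/(f_j^{b_j}),
\]
where the $f_j$ are distinguished irreducible polynomials in $\Zpt$.

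The final step is to recover $|A_n|$ from the quotients $X/\omega_n X$, where $\omega_n = (1+T)^{p^n} - 1$. Here one must carefully compare $\Gal(L_n/K_n)$ with $X_{\Gamma_n} = X/\omega_n X$; they differ only by contributions from the inertia of the finitely many ramified primes, and these contributions stabilize for $n$ large. Once this identification is in place, a direct computation on each elementary summand gives, for $n \gg 0$,
\[
v_p\bigl(|\Lambda/(p^{a_i}, \omega_n)|\bigr) = a_i p^n, \qquad v_p\bigl(|\Lambda/(f_j^{b_j}, \omega_n)|\bigr) = b_j \deg(f_j)\, n + O(1),
\]
because $f_j$ is coprime to $\omega_n$ for $n$ large. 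Summing these contributions and absorbing the bounded error into $\nu$ yields $e_n = \mu p^n + \lambda n + \nu$ with $\mu = \sum a_i$ and $\lambda = \sum b_j \deg f_j$.

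The main obstacle I expect is the passage between $X_{\Gamma_n}$ and $A_n$: controlling the discrepancy caused by ramified primes, and showing this discrepancy is eventually constant in $n$, is the technical heart of the argument. The structure theorem itself is purely algebraic, and the module-theoretic computation of $|X/\omega_n X|$ is routine once pseudo-isomorphism is understood to introduce only a bounded error that can be folded into $\nu$.
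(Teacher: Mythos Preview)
The paper does not actually prove this theorem: it is quoted in the introduction as a classical result of Iwasawa, purely to motivate the analogous growth formula for Tate--Shafarevich groups of elliptic curves, and no argument is supplied. So there is nothing in the paper to compare your proposal against.

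That said, your outline is the standard proof (essentially Iwasawa's original argument, as presented for instance in Washington's \emph{Introduction to Cyclotomic Fields}, which the paper cites for the structure theorem). The reduction to a finitely generated torsion $\Lambda$-module $X$, the pseudo-isomorphism to an elementary module, and the computation of $|E/\omega_n E|$ on elementary pieces are all correct. Your identification of the main technical point---controlling the discrepancy between $A_n$ and $X_{\Gamma_n}$ coming from the finitely many ramified primes and showing it stabilizes---is exactly right; this is where the constant $\nu$ absorbs the bounded error.
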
	
	Given any number field $K$, there is an important example of a $\Z_p$-extension of $K$ called the \textit{cyclotomic $\Z_p$-extension} of $K$, denoted $K^{\text{cyc}}/K$. It is defined by letting $K^{\text{cyc}}$ be the appropriate subfield of $\cup_{n \geqq 1} K(\z_{p^n})$. Iwasawa made the following conjecture about the cyclotomic $\Z_p$-extension.
	\begin{conj}[Iwasawa's $\mu=0$ Conjecture] Let $K$ be a number field and let $K^{\text{cyc}}$ be the cyclotomic $\Z_p$ extension of $K$. Then $\mu=0$. 
	\end{conj}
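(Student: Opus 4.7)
The plan is to reformulate the question about class-number growth as a statement about a single Iwasawa module, and then to analyze that module. Write $\Lambda = \Zpt$, viewed as the completed group ring of $\Gal(K^{\text{cyc}}/K)$ after choosing a topological generator, and let $X$ be the inverse limit of the $p$-parts of the class groups of the $K_n$. By Iwasawa's structure theorem, $X$ is a finitely generated torsion $\Lambda$-module, pseudo-isomorphic to a direct sum of cyclic modules of the form $\Lambda/(p^{m_i})$ and $\Lambda/(f_j)$ for distinguished polynomials $f_j$, and the conjecture $\mu = 0$ is precisely the statement that no summand of the first type occurs. Equivalently, one must show that $X$ contains no nonzero $\Lambda$-submodule killed by a power of $p$.

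For the case when $K/\Q$ is abelian, the plan is to reproduce the strategy of Ferrero and Washington via $p$-adic $L$-functions. Combining Iwasawa's analytic construction with the main conjecture of Mazur and Wiles, the characteristic ideal of the $\chi$-isotypic component of $X$ is generated by a power series $f_\chi(T) \in \Zpt$ that interpolates values of the Kubota--Leopoldt $L$-function $L_p(s, \chi)$. Since $\mu$-invariants add in products, it suffices to prove $\mu(f_\chi) = 0$ for each even Dirichlet character $\chi$. The core of the Ferrero--Washington argument is then to expand $f_\chi(T) \bmod p$ in terms of $p$-adic digits of explicit quantities built from Teichm\"uller characters and elements of $\Z_p^{\times}$, and to show combinatorially that these digits cannot be identically zero; this reduces, via a clever counting argument, to the fact that the $p$-adic digits of a nonzero algebraic number cannot contain arbitrarily long blocks of zeros.

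The main obstacle, and the reason Iwasawa's conjecture is still open in full generality, is that this entire approach is analytic: it depends on a $p$-adic $L$-function, whose construction is available only for $K$ abelian over $\Q$. For general $K$ there is neither such an $L$-function nor a proven main conjecture linking $X$ to an analytic object. A natural intermediate step would be to handle $K$ abelian over an imaginary quadratic field using Katz's two-variable $p$-adic $L$-function together with Rubin's main conjecture, but even this does not reach the totally real non-abelian case. I therefore expect that any full proof must introduce a genuinely algebraic mechanism --- perhaps via congruences between automorphic forms, or a deformation-theoretic input --- rather than refine the Ferrero--Washington strategy; accordingly the present proposal is a complete proof only in the abelian case.
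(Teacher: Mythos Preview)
The statement you were asked to prove is a \emph{conjecture}, and the paper does not prove it. Immediately after stating it, the paper writes: ``This conjecture is probably the deepest open problem from classical Iwasawa theory. Ferrero and Washington have proven this conjecture in the case when $K/\Q$ is abelian. Other than this, Iwasawa's conjecture remains wide open.'' So there is no proof in the paper to compare against; the conjecture appears only as motivation and background for Greenberg's elliptic-curve analogue, which is the paper's actual subject.

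Your proposal is honest about this: you correctly flag that the general case is open and that only the abelian case is settled. That diagnosis matches the paper. However, even as a sketch of the abelian case your outline has an anachronism worth noting. You route the argument through the Mazur--Wiles main conjecture to link $X$ to $p$-adic $L$-functions, but Ferrero--Washington (1979) predates Mazur--Wiles (1984) and does not need it. What Ferrero--Washington actually prove is that the Iwasawa power series attached to the Kubota--Leopoldt $L_p(s,\chi)$ has $\mu=0$; the passage to the algebraic $\mu$-invariant uses Iwasawa's earlier work relating the minus part of $X$ to Stickelberger elements, together with the fact (also due to Iwasawa) that $\mu^-=0$ forces $\mu^+=0$. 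Invoking the full main conjecture is logically fine but historically and technically heavier than required. Your description of the combinatorial core (``$p$-adic digits cannot be identically zero'') is in the right spirit, though the actual mechanism is an equidistribution/normal-number argument rather than a statement about blocks of zeros in a single algebraic number.

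In short: there is no gap to name because there is no target proof; your proposal is a reasonable high-level summary of the known partial result, modulo the Mazur--Wiles detour.
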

	This conjecture is probably the deepest open problem from classical Iwasawa theory. Ferrero and Washington \cite{fw} have proven this conjecture in the case when $K/\Q$ is abelian. Other than this, Iwasawa's conjecture remains wide open.
	
	This paper is about a generalization of the Iwasawa $\mu=0$ conjecture to the Iwasawa theory of \textit{elliptic curves}. We now briefly outline the main aspects of the Iwasawa theory of elliptic curves and then state the analogue of Iwasawa's $\mu=0$ conjecture in this context. In a landmark paper \cite{mazur}, Mazur observed that many techniques from classical Iwasawa theory could be used to study elliptic curves. If $E$ is an elliptic curve over a number field $K$, let $\Sha(E/K)[p^{\infty}]$ be the $p$-primary part of the Tate-Shafarevich group of $E$ over $K$. We then have the following analogue \cite[Theorem 1.10]{greenberg} of Iwasawa's theorem.
	\begin{thm*}
		Let $\Q^{\text{cyc}} = \cup \Q_n$ denote the cyclotomic $\Z_p$ extension of $\Q$. Let $E/\Q$ be an elliptic curve, and let $p$ be a prime of good ordinary reduction. Assume that $\Sha(E/\Q_n)[p^{\infty}]$ is finite for all $n$. Let $p^{e_n}$ be the exact power of $p$ dividing $|\Sha(E/\Q_n)[p^{\infty}]|$. Then there exist integers $\mu, \lambda, \nu$  such that 
		\[ e_n = \mu p^n + \lambda n + \nu \]
		for all sufficiently large values of $n$.
	\end{thm*}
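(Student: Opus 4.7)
My approach would follow the now-classical Iwasawa-theoretic template, with Mazur's control theorem replacing the class-field-theoretic input used in Iwasawa's original argument. Let $\Gamma = \Gal(\Q^{\text{cyc}}/\Q)$, let $\Gamma_n = \Gal(\Q^{\text{cyc}}/\Q_n)$, identify $\Lambda := \Z_p\llbracket \Gamma \rrbracket$ with $\Zpt$ via a choice of topological generator, and set $\omega_n := (1+T)^{p^n} - 1$, so that $\Lambda/(\omega_n) = \Z_p[\Gal(\Q_n/\Q)]$. Let $S := \Sel(E/\Q^{\text{cyc}})[p^{\infty}]$ and let $X := S^\vee$ denote its Pontryagin dual. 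By Mazur's theorem, $X$ is a finitely generated $\Lambda$-module, and under the standing hypotheses (good ordinary reduction at $p$ together with finiteness of every $\Sha(E/\Q_n)[p^{\infty}]$) one shows, using the Mordell-Weil/Selmer/Sha exact sequence together with the control theorem below, that $X$ is in fact a torsion $\Lambda$-module. The structure theorem then provides a pseudo-isomorphism
\[ X \sim \bigoplus_i \Lambda/(p^{\mu_i}) \oplus \bigoplus_j \Lambda/\bigl(f_j(T)^{m_j}\bigr), \]
with $f_j$ distinguished polynomials; I would define $\mu := \sum_i \mu_i$ and $\lambda := \sum_j m_j \deg(f_j)$. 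These are the invariants that will appear in the growth formula for $e_n$.

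The second step is to invoke Mazur's control theorem: the restriction map $\Sel(E/\Q_n)[p^{\infty}] \to S^{\Gamma_n}$ has finite kernel and cokernel whose orders are bounded independently of $n$. Dualizing, the Pontryagin dual of $\Sel(E/\Q_n)[p^{\infty}]$ differs from $X/\omega_n X$ only by modules of bounded order. A direct summand-by-summand computation on the pseudo-decomposition then shows that for $n \gg 0$,
\[ \log_p \bigl|\, X / \omega_n X \,\bigr| \,=\, \mu p^n + \lambda n + c \]
for some constant $c$: each factor $\Lambda/(p^{\mu_i})$ contributes $\mu_i p^n$ to the $p$-adic valuation of the quotient by $\omega_n$, while each $\Lambda/(f_j^{m_j})$ contributes $m_j \deg(f_j)\cdot n + O(1)$ once $n$ is large enough that $\omega_n/\omega_{n-1}$ is coprime to $f_j$.

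Finally, I would combine this counting with the Mordell-Weil/Shafarevich-Tate exact sequence
\[ 0 \to E(\Q_n) \otimes \Q_p/\Z_p \to \Sel(E/\Q_n)[p^{\infty}] \to \Sha(E/\Q_n)[p^{\infty}] \to 0 \]
to isolate the contribution of $\Sha$. Since $E(\Q^{\text{cyc}})$ is finitely generated (a further consequence of the torsion property of $X$ and Mazur's theory), the ranks $r_n := \text{rank}\, E(\Q_n)$ stabilize for $n$ large, so the divisible factor $E(\Q_n)\otimes \Q_p/\Z_p$ contributes only a bounded additive correction to $\log_p|\Sel(E/\Q_n)[p^{\infty}]|$. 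Absorbing all bounded errors into a single constant $\nu$ then yields $e_n = \mu p^n + \lambda n + \nu$ for $n$ sufficiently large.

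The main obstacle in this program is the bounded-error version of Mazur's control theorem. Its proof requires a careful local analysis of the Selmer local conditions: at the prime $p$ one uses the good ordinary hypothesis to express the local condition via the formal group $\hat{E}$, reducing the comparison between finite- and infinite-level local cohomologies to a controllable computation along the local cyclotomic tower; at the finitely many primes of bad reduction one exploits the fact that the inertia groups have only a bounded pro-$p$ image along the tower. Without this uniformity in $n$, only the leading-order $\mu p^n$ term would survive cleanly, and the linear and constant terms would not be well defined.
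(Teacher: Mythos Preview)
The paper does not give its own proof of this theorem; it is quoted as background with a reference to Greenberg's expository article. Your sketch is exactly the standard argument one finds there (Mazur's control theorem plus the $\Lambda$-module structure theorem and a summand-by-summand count of $|X/\omega_n X|$), so there is nothing to compare.

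One wording issue worth cleaning up: in your final step you speak of ``a bounded additive correction to $\log_p|\Sel(E/\Q_n)[p^{\infty}]|$,'' but when $r_n>0$ the Selmer group is infinite and this quantity is undefined. Likewise, $X/\omega_n X$ need not be finite: if a distinguished factor $f_j$ shares a root with $\omega_n$ (equivalently, if the characteristic power series vanishes at some $\zeta_{p^m}-1$), the quotient acquires a free $\Z_p$-summand. The correct bookkeeping is to track the $\Z_p$-torsion submodule of $X/\omega_n X$; its order is $p^{\mu p^n+(\lambda-r_\infty)n+O(1)}$ once $n$ is large, where $r_\infty$ is the stable Mordell--Weil rank you already identified. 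Since the theorem only asserts the existence of \emph{some} integers $\mu,\lambda,\nu$, this adjustment is harmless, but your explicit identification of the growth-$\lambda$ with the full Selmer $\lambda$-invariant is off by $r_\infty$ in general.
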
 
	The quantity $\mu $ in the above formula is called the \emph{$\mu$-invariant} of $E$ over $\Q^{\text{cyc}}$. (See \hyperref[sec:preliminaries]{Section 2} for a more abstract definition of $\mu$.) In analogy with Iwasawa's $\mu=0$ conjecture, one might expect that $\mu =0$. It turns out that this is \textit{false}. Indeed, Mazur \cite{mazur} discovered that the elliptic curve $X_0(11)$ has a nonzero $\mu$-invariant for the prime $p=5$. However, Greenberg has conjectured \cite[Conjecture 1.11]{greenberg} that one can rescue Iwasawa's conjecture in the following sense: 
	
	\begin{conj}[Greenberg's $\mu=0$ Conjecture] 
		\label{greenberg}
		Let $E/\Q$ be an elliptic curve, and let $p$ be an odd prime of good ordinary reduction. If $E[p]$ is \textit{irreducible} as a Galois module, then $\mu =0$.  
	\end{conj}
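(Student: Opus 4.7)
The plan is to bound the algebraic $\mu$-invariant by an analytic one via Kato's Euler system, and then to rule out a positive analytic $\mu$-invariant using the irreducibility of $E[p]$. The first step invokes Kato's divisibility in the Iwasawa main conjecture: for any elliptic curve $E/\Q$ and any odd prime $p$ of good ordinary reduction, one has
\[
\char_{\Zpt}\bigl(\Sel(E/\Qcyc)^{\vee}\bigr) \;\Big|\; \bigl(L_p(E, T)\bigr)
\]
in $\Zpt$, where $L_p(E, T)$ is the Mazur--Swinnerton-Dyer $p$-adic $L$-function attached to $f_E$. Passing to $\mu$-invariants of both sides yields $\mu^{\mathrm{alg}}(E, p) \leq \mu^{\mathrm{an}}(E, p)$, so it suffices to prove that $\mu^{\mathrm{an}}(E, p) = 0$ whenever $\overline{\rho}_{E, p}$ is irreducible.

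The second step translates analytic $\mu = 0$ into a statement about modular symbols. Let $\varphi_E^+ \colon \Q/\Z \to \Z_p$ denote the plus modular symbol of $f_E$, normalized against the canonical $+$-period lattice. The coefficients of $L_p(E, T)$ are then finite $\Z_p$-linear combinations of the values $\varphi_E^+(a/p^n)$ via a Riemann-sum construction, and one has
\[
\mu^{\mathrm{an}}(E, p) \;=\; \min_{a, n} \, \ord\bigl(\varphi_E^+(a/p^n)\bigr).
\]
Consequently $\mu^{\mathrm{an}} = 0$ is equivalent to the non-vanishing of the mod-$p$ reduction $\overline{\varphi_E^+}$ as an element of $\mathrm{Symb}_{\Gamma_0(N)}(\F_p)^+$.

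The third step is to argue that irreducibility of $E[p]$ forces $\overline{\varphi_E^+} \neq 0$. The $\F_p$-vector space $\mathrm{Symb}_{\Gamma_0(N)}(\F_p)^+$ carries a Hecke action and decomposes into generalized eigenspaces indexed by systems of mod-$p$ Hecke eigenvalues, and by Eichler--Shimura and Deligne the eigensystem associated to $f_E$ corresponds to $\overline{\rho}_{E, p}$. If the $f_E$-eigenpart of $\overline{\varphi_E^+}$ vanished, one would obtain a mod-$p$ congruence between $f_E$ and either an Eisenstein series of level $\Gamma_0(N)$ or a cuspform of level dividing $N$ whose residual Galois representation is a direct sum of characters; either scenario would equip $\overline{\rho}_{E, p}$ with a Galois-stable line, contradicting the irreducibility hypothesis.

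The main obstacle is in making the third step precise: what is really required is a \emph{multiplicity-one}, or Gorenstein, statement for the localization of the Hecke algebra at the maximal ideal cut out by $f_E \bmod p$, asserting that the $f_E$-isotypic component of the integral symbol lattice $\mathrm{Symb}_{\Gamma_0(N)}(\Z_p)^+$ is free of rank one over that localization. Such a statement implies directly that $\varphi_E^+$ is a generator, hence of unit content mod $p$. Work of Mazur on $X_0(p)$, and later of Wiles and of Emerton--Pollack, establishes this Gorenstein property in many cases (for instance when $N$ is squarefree and $\overline{\rho}_{E, p}$ is ramified at every $\ell \mid N$), but the uniform version needed to cover all odd primes of good ordinary reduction remains open and is essentially equivalent to Greenberg's conjecture itself. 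Any complete attack would likely need to combine modularity-lifting methods to control the structure of the Hecke algebra with an Ihara-type level-lowering or level-raising argument to propagate non-vanishing from auxiliary levels where the Gorenstein property is already known, together with a Ferrero--Washington style input at the residual level to rule out systematic divisibility of modular symbols.
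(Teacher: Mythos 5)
The statement you are addressing is Greenberg's conjecture itself, which the paper \emph{does not prove}; it is stated as Conjecture~\ref{greenberg} and remains open. What the paper proves is the weaker Theorem~\ref{thm: main-thm}: $\mu \leq 1$ for all but finitely many good ordinary primes. Your proposal is aimed at the full conjecture, and you correctly and honestly identify why it cannot be completed: after reducing (via Kato) to $\mu^{\mathrm{an}} = 0$, and after translating that (via the measure-theoretic description of the $p$-adic $L$-function) into non-vanishing of the mod-$p$ modular symbol $\overline{\varphi_E^+}$, the remaining step requires a multiplicity-one/Gorenstein statement for the Hecke-module of mod-$p$ modular symbols, uniformly in $p$. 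As you say, this is not known in the generality needed and is essentially equivalent to the conjecture; Emerton--Pollack--Weston exploit exactly this circle of ideas to show that $\mu = 0$ propagates through a Hida family, but one still needs a single member of the family with $\mu = 0$ to get started. So the proposal has a genuine gap, one you have accurately located.

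The paper's route is orthogonal to yours and deliberately avoids the multiplicity-one question. Instead of trying to show the modular symbol is a unit mod $p$ algebraically, it bounds the \emph{Archimedean} size of modular symbols: using Manin's continued-fraction decomposition of $\int_{i\infty}^{r} f\,dz$ and Lam\'e's bound on the length of a continued fraction, it proves $[r] \ll_N \log(\mathrm{denominator}(r)) + O_N(1)$. Combining this with the Hasse--Weil bound $|a_p| \leq 2\sqrt p$ and a fixed integer denominator $\delta_E$, the relevant coefficient of the $p$-adic $L$-function, after clearing denominators, is an integer of absolute value $\ll_N p^{3/2}\log p$. Chinta's non-vanishing theorem for twisted $L$-values guarantees this integer is nonzero for some $n$ bounded independently of $p$; an integer that is nonzero and of size $< p^2$ cannot vanish mod $p^2$, giving $\mu \leq 1$ once $p$ is large. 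The trade-off is clear: your approach, if the Gorenstein input were available, would give the full $\mu = 0$ for every irreducible $p$; the paper's analytic/Diophantine approach needs no multiplicity-one input and is unconditional, but the growth-rate bound only beats $p^2$, not $p$, and only for $p$ large, so it yields $\mu \leq 1$ for all but finitely many primes rather than $\mu = 0$.
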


        In this paper, we prove:

        \begin{thm}
        \label{thm: main-thm}
            Let $E/\Q$ be an elliptic curve. For all but finitely many primes $p$ with good ordinary reduction, we have $\mu \leq 1$. 
        \end{thm}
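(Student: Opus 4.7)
The plan is to combine Kato's divisibility in the Iwasawa Main Conjecture with an analysis of the $\mu$-invariant of the analytic $p$-adic $L$-function, reducing the algebraic bound to an arithmetic statement about modular symbols. First, by Mazur's theorem classifying rational $p$-isogenies of elliptic curves over $\Q$, $E[p]$ is reducible only for $p$ in a finite explicit set, so for all but finitely many primes we may assume that $E[p]$ is irreducible as a $\Gal(\bar{\Q}/\Q)$-module. When $E$ has complex multiplication the remaining primes are handled separately via the Iwasawa theory of CM elliptic curves, where $\mu = 0$ is known at split ordinary primes.

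Next, Kato's Euler system applies. For good ordinary primes $p$ with $E[p]$ irreducible, the Pontryagin dual $X$ of $\Sel(E/\Q^{\text{cyc}})$ is a torsion $\Zpt$-module, and there is an integral divisibility
\[
\char_{\Zpt}(X) \mid L_p(E,T) \quad \text{in } \Zpt,
\]
where $L_p(E,T)$ is the analytic $p$-adic $L$-function of $E$. Taking $\mu$-invariants gives $\mu \leq \mu(L_p(E,T))$, so the problem reduces to showing $\mu(L_p(E,T)) \leq 1$ for all but finitely many primes $p$ of good ordinary reduction.

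The third step bounds the analytic $\mu$-invariant via the Mazur--Tate construction. The $p$-adic $L$-function is recovered from the elements $\theta_n \in \Z_p[\Gal(\Q_n/\Q)]$ whose coefficients are the modular symbols $\phi_E^\pm(a/p^n)$, via the congruence $L_p(E,T) \equiv \alpha^{-n}\theta_n \pmod{\omega_n}$, where $\alpha$ is the unit root of Frobenius at $p$ and $\omega_n = (1+T)^{p^n}-1$. Since $v_p(L_p \bmod \omega_n) = v_p(\theta_n) = \min_a v_p(\phi_E^\pm(a/p^n))$, the bound $\mu(L_p(E,T)) \leq 1$ is equivalent to requiring that, for some $n$, some modular symbol $\phi_E^\pm(a/p^n)$ have $p$-adic valuation at most $1$. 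When $E$ has rank $0$, the symbol $\phi_E(0) = L(E,1)/\Omega_E^+$ is a fixed nonzero rational, so for $p$ large $v_p(\phi_E(0)) = 0$ and hence $\mu(L_p(E,T)) = 0$.

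The main obstacle is the rank $\geq 1$ case, in which $\phi_E(0) = 0$ and one must study modular symbols at higher level. The plan is to combine the nonvanishing of $L_p(E,T)$---ensured by Kato's theorem and Rohrlich's nonvanishing theorem for twisted $L$-values---with a finiteness statement asserting that, for all but finitely many primes $p$, the algebraic parts $L(E \otimes \chi, 1)/\Omega_E^\pm$ for Dirichlet characters $\chi$ of $p$-power conductor cannot all be divisible by $p^2$. The key technical input is an explicit control of the $p$-adic valuations of these algebraic parts through the integrality of modular symbols together with a counting or pigeonhole argument over twists; it is here that the $\leq 1$ bound enters rather than the conjectural $\mu = 0$, reflecting the sharpest uniform statement that modular-symbol methods currently extract in the higher-rank case.
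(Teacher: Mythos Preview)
Your reduction to the analytic $\mu$-invariant via Kato is correct, and the rank-zero case does follow from $L(E,1)/\Omega_E\neq 0$. But the final paragraph, which is supposed to carry the positive-rank case, is not a proof: you invoke ``$p$-adic valuation control'' and a ``pigeonhole argument'', and neither is the mechanism that works here. The actual argument is \emph{archimedean}, not $p$-adic. (There is also a small slip: the coefficients of $\mathcal{L}_p^{(n)}(E)$ in $\Z_p[\Gal(\Q_n/\Q)]$ are not individual modular symbols but sums $\sum_{\eta\in\mu_{p-1}}(\cdots)$ over Teichm\"uller representatives, since $\Gal(\Q_n/\Q)$ is the quotient of $(\Z/p^{n+1}\Z)^\times$ by $\mu_{p-1}$.)

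The missing idea is the following. Up to a unit, the relevant coefficient at level $n$ is
\[
S(a)=\sum_{\eta\in\mu_{p-1}} a_p\!\left[\frac{\eta a}{p^{n}}\right]-\left[\frac{\eta a}{p^{n-1}}\right],
\]
and one must find some $S(a)\not\equiv 0\pmod{p^2}$. Two inputs combine:
\begin{enumerate}
\item A theorem of Chinta gives an integer $n_0$ depending only on the conductor $N$, \emph{not on $p$}, such that for every $p\nmid N$ and every $n\geq n_0$ some character $\chi$ of conductor $p^n$ and $p$-power order has $L(E,\chi,1)\neq 0$. Via Birch's formula this forces some $S(a)\neq 0$ at the fixed level $n_0$. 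Rohrlich's theorem, which you cite, does not give this uniformity in $p$ and is insufficient.
\item A size bound $|[r]|\ll_N \log(\mathrm{denominator}(r))$, proved by Manin's continued-fraction trick together with Lam\'e's bound on the length of the Euclidean algorithm. Combined with the Hasse bound $|a_p|\leq 2\sqrt{p}$ and the $p-1$ terms in the sum, this yields $|\delta_E\,S(a)|\ll_N p^{3/2}\log p < p^2$ for $p$ large, where $\delta_E$ is a fixed denominator for all modular symbols of $E$.
\end{enumerate}
A nonzero rational integer of absolute value below $p^2$ is automatically $\not\equiv 0\pmod{p^2}$; this is precisely where the bound $\mu\leq 1$ (rather than $\mu=0$) emerges. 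Your outline lacks both the uniform-in-$p$ nonvanishing level $n_0$ and the archimedean size estimate, and without them there is no argument in positive rank.
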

 
        \subsection{Strategy}

    The Iwasawa main conjecture says that to prove that $\mu \leq 1$, it suffices to show that the $p$-adic $L$-function of $E$ is not divisible by $p^2$. To show this, we express the coefficients of the $p$-adic $L$-function of $E$ in terms of modular symbols. We study these modular symbols using continued fractions.

        \subsection{Related Work} 

       Greenberg has also made a more general conjecture that also applies in the case where $E[p]$ is \textit{reducible}: if $E$ is an elliptic curve over $\Q$ and $p \neq 2$ is a good ordinary prime, there exists an isogenous curve $E'$ for which $\mu(E')=0$. Drinen \cite{drinen} has shown that this more general conjecture is false over general number fields, so it applies particularly for curves $E$ over $\Q$.  
        
	Greenberg's conjecture bifurcates into two cases: the case where $E[p]$ is reducible as a Galois module and second, there is the case where $E[p]$ is irreducible as a Galois module. In the \textit{reducible} case, we have the following theorem \cite[Theorem 1.3]{greenberg-vatsal} of Greenberg and Vatsal: if $E[p]$ has a Galois submodule which is either unramified at $p$ and odd, or ramified at $p$ and even, then $\mu =0$. Trifkovic \cite{mak} furthered this result by showing that Greenberg's conjecture holds for infinitely many curves $E$ with $E[p]$ reducible for the primes $p = 3$ or $5$. 

    In a different direction, Emerton-Pollack-Weston \cite{emerton-pollack-weston} showed that "$\mu=0$" is invariant in Hida families. That is, they showed that if a Hida family of modular forms contains a single form with $\mu=0$, then \textit{every} form in the Hida family has $\mu=0$. In addition, Ray recently \cite{ray2} showed that there is a positive proportion of elliptic curves $E/\Q$ for which $\mu_5(E) = \lambda_5(E)=0$. To prove this, Ray uses results of Bhargava-Shankar \cite{bhargava-shankar} on average ranks of Selmer groups. In another paper, Ray \cite{ray1} also showed that when $E[p]$ is irreducible, if the classical Iwasawa $\mu$-invariant vanishes for the splitting field $\Q(E[p])$, and a purely Galois theoretic condition on $E[p]$ holds, Greenberg's conjecture holds for $E$.

    In the supersingular case, Gajek-Leonard \cite{rylan} has recently proven upper bounds for the $\mu^{\pm}$ invariants of elliptic curves for all sufficiently large supersingular primes given certain conditions on the $\lambda^{\pm}$-invariants of the curve. In particular, Gajek-Leonard shows that if $E$ has rank zero then $\mu ^- \leq 1$ and $\mu^+ \leq 2$ hold on density $1$ set of good supersingular primes.  
    
    \subsection{Acknowledgements} I would like to thank Prof. Kumar Murty for his constant encouragement and mentorship. I would also like to sincerely thank Debanjana Kundu for guiding me in my first steps in research, for suggesting this problem to me, and for her mentorship! I'd like to thank Barry Mazur, Antonio Lei, and Rylan Gajek-Leonard for their helpful comments. I especially thank A. Lei and R. Gajek-Leonard for catching a mistake in an earlier version of this article. I also thank the members of the GANITA lab for listening to a version of the proof and offering helpful suggestions. 

        \section{Preliminaries}
    
    In this section, let $E/\Q$ be an elliptic curve with good ordinary reduction at an odd prime $p$. Let $\Q^{\text{cyc}}/\Q$ denote the cyclotomic $\Z_p$ extension. 
	
	We summarize the Iwasawa main conjecture for elliptic curves. First we will discuss the objects on the algebraic side of the main conjecture, then we will discuss the objects on the analytic side, and then state the Iwasawa main conjecture which bridges these two worlds.
	
	\subsection*{The algebraic side}

	If $K/\Q$ is an algebraic extension, the Selmer group of $E$ over $K$ is a subgroup of $H^1(K, E(\overline{\Q})_{\text{tors}})$ which fits into the fundamental exact sequence
	\[ 0 \to E(K) \otimes \Q/\Z \to \text{Sel}(E/K) \to \Sha(E/K) \to 0 \] 
	where $\Sha(E/K)$ denotes the Tate-Shafarevich group of $E$ over $K$. Now we put $K = \Q^{\text{cyc}}$. Then we can consider the Selmer group $\text{Sel}(E/\Q^{\text{cyc}})$. This Selmer group has an action of the Galois group $\Gamma=\Gal(\Q^{\text{cyc}}/\Q)$. We view the $p$-primary subgroup  $\text{Sel}(E/\Q^{\text{cyc}})_p$ as a $\Lambda$-module, where $\Lambda = \Zpt$. This ring $\Lambda$ is called the \textit{Iwasawa algebra}. By a deep theorem of Kato \cite{kato}, the Pontryagin dual $X(E/\Q^{\text{cyc}}) = \text{Sel}(E/\Q^{\text{cyc}})_p^{\vee}$ is finitely generated and \textit{torsion} as a $\Lambda$-module. By the structure theorem for finitely generated $\Lambda$-modules (see \cite[Theorem 13.12]{washington}), there is a pseudo-isomorphism
	\[ X(E/\Q^{\text{cyc}}) \sim \left( \oplus_{i=1}^n \Lambda/(f_i(T)^{a_i}) \right) \oplus \left( \oplus_{j=1}^m \Lambda/(p^{\mu_j}) \right),  \] 
	where the $f_i(T)$'s are irreducible distinguished polynomials in $\Lambda$. We define the (algebraic) Iwasawa invariants by 
	\[ \lambda_{E}^{\text{alg}} = \sum_{i=1}^n a_i \text{deg}(f_i(T)), \hspace{20pt}\text{and}\hspace{20pt} \mu^{\mathrm{alg}}_E = \sum_{j=0}^m \mu_j. \]
	The \textit{characteristic ideal} of $X(E/\Q^{\text{cyc}})$ is the ideal of $\Lambda$ generated by $p^{\mu}f_1(T)^{a_1}\dots f_n(T)^{a_n}$. 
	
	\subsection*{The analytic side}
	Let $\chi$ be an even Dirichlet character. Let $L(E, \chi, s)$ denote the Hasse Weil $L$-function of $E$ twisted by the character $\chi$. Let $\Omega_E$ denote the N\'eron period of $E$. By the work of Shimura (see \cite{shimura, shimura2}) the value $L(E, \chi, 1)/\Omega_E$ is an \textit{algebraic} number. 
	Mazur and Swinnerton-Dyer have attached to $E$ a $p$-adic $L$-function $\mathcal{L}_p(E, T) \in \Lambda \otimes \Q_p$ satisfying the following interpolation properties. Let $a_p = (p+1) - \# E (\F_p)$ and let $\a_p\in \Z_p^{\times}$ denote the unique $p$-adic unit root of the Hecke polynomial $X^2 - a_pX + p$. If $\chi$ is an even Dirichlet character of conductor $p^n$ and $p$-power order, then
    \begin{equation}
        \label{eqn: interpolation-property}
        \mathcal{L}_p(E, \chi(1+p)-1) = \dfrac{g(\chi)}{\a_p^{n+1}} \dfrac{L(E,\overline{\chi}, 1)}{\Omega_E}.
    \end{equation}
    Furthermore we also have
    \[ \mathcal{L}_p(E, 0) = \left( 1- \dfrac{1}{\a} \right)^2 \cdot \dfrac{L(E,1)}{\Omega_E}.
    \]
	Using the Weierstrauss preparation theorem \cite[Theorem 7.3]{washington}, we can factor the $p$-adic $L$-series as follows
	\[  \mathcal{L}_p(E, T) = p^{\mu^{\mathrm{an}}_E} \cdot u(T) \cdot f(T) \]
	where $f(T)$ is a distinguished polynomial of degree $\lambda_E^{an}$ and $u(T)$ is a unit in $\Lambda$. The values $\mu^{\mathrm{an}}_E$ and $\lambda_E^{an}$ are the (analytic) Iwasawa invariants of $E$.
	By \cite[Proposition 3.7]{greenberg-vatsal}, if $E[p]$ is irreducible as a Galois module, then $\mathcal{L}_p(E, T) \in \Lambda$. In other words, $\mu^{\mathrm{an}}_E \geqq 0$. 
	
	\subsection*{Iwasawa Main Conjecture}
	The Iwasawa \textit{main conjecture} relates the Selmer group of $E$ over $\Qcyc$ to the $p$-adic $L$-function of $E$. Precisely, suppose that $E[p]$ is irreducible as a Galois module. The \textit{main conjecture} asserts that the characteristic ideal of the $p$-primary Selmer group $X(E/\Q^{\text{cyc}})$ is generated by the $p$-adic $L$-function $\mathcal{L}_p(E, T)$. In particular, the main conjecture asserts that the algebraic and analytic Iwasawa invariants agree: $\mu_{E}^{\text{alg}}=\mu_{E}^{\text{an}}$ and $\lambda_{E}^{\text{alg}}=\lambda_{E}^{\text{an}}$. Kato \cite{kato} has proven one divisibility of the main conjecture, which we state as follows:
	
	\begin{thm}[Kato]
		\label{kato}
		Let $E/\Q$ be an elliptic curve, and let $p \geqq 5$ be a prime of good ordinary reduction. Assume that the mod $p$ Galois representation $\overline{\rho_{E,p}}: \Gal(\overline{\Q}/\Q) \to \text{GL}_2(\F_p)$ is surjective. Then the characteristic ideal of $X(E/\Q^{\text{cyc}})$ divides the ideal generated by $\mathcal{L}_p(E, T)$ in $\Lambda$. In particular we have $\mu_{E}^{\text{alg}}\leqq \mu_{E}^{\text{an}}$ and $\lambda_{E}^{\text{alg}}\leqq \lambda_{E}^{\text{an}}$.
	\end{thm}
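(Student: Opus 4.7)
The plan is to prove one divisibility in the Iwasawa main conjecture for $E$ over $\Qcyc$ by constructing an Euler system for the $p$-adic Tate module $T_p(E)$ and feeding it into the general Kolyvagin--Rubin machinery. The bridge from analysis to algebra is not the $L$-values directly, but a compatible family of Galois cohomology classes whose image under a $p$-adic regulator matches $\mathcal{L}_p(E, T)$.

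First I would construct the Beilinson--Kato classes. Starting from Siegel units on the modular curves $Y_1(Np^n)$ (with $N$ the conductor of $E$), take cup products in motivic/$K_2$-cohomology and push forward through the modular parametrization to obtain a compatible system of classes $z_n \in H^1\bigl(\Q(\zeta_{p^n}), T_p(E)\bigr)$. The key property is the Euler system norm relation: for a prime $\ell \nmid Np$, the corestriction of the class at the auxiliary level $\ell$ is $P_\ell(\Frob_\ell^{-1}) \cdot z_n$, where $P_\ell$ is the Euler factor of $E$ at $\ell$. Assembling over $n$ yields an Iwasawa-theoretic class $z_\infty$ over $\Qcyc$.

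The critical analytic link is the explicit reciprocity law: applying the Perrin-Riou big dual exponential at $p$ to $z_\infty$ produces the Mazur--Swinnerton-Dyer $p$-adic $L$-function $\mathcal{L}_p(E, T)$, up to the expected Euler factor. I would prove this by pairing both sides against modular symbols and invoking the interpolation property (\ref{eqn: interpolation-property}) together with Shimura's rationality theorems for modular periods. At its technical core, this is a $p$-adic Hodge-theoretic computation that turns the de Rham realization of a Siegel unit (a Coleman integral on the modular curve) into the special values appearing in $\mathcal{L}_p$.

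With the Euler system and reciprocity law in hand, I apply the Kolyvagin derivative construction, in the Iwasawa-theoretic formulation of Rubin (or Mazur-Rubin), to bound the Pontryagin-dual Selmer group: the characteristic ideal of $X(E/\Qcyc)$ divides the principal ideal generated by $z_\infty$ under the reciprocity isomorphism, which by the previous step is $(\mathcal{L}_p(E, T))$. The surjectivity hypothesis on $\overline{\rho_{E,p}}$ supplies the big-image conditions demanded by the machinery (non-degeneracy of Kolyvagin derivatives, absence of $\Gal(\overline{\Q}/\Q)$-stable submodules of $T_p(E)/pT_p(E)$), and the restriction $p \geq 5$ eliminates exceptional subgroups of $\GL_2(\F_p)$. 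The inequalities $\mu_E^{\mathrm{alg}} \leq \mu_E^{\mathrm{an}}$ and $\lambda_E^{\mathrm{alg}} \leq \lambda_E^{\mathrm{an}}$ then follow formally from the divisibility of characteristic ideals. The main obstacle is the explicit reciprocity law: constructing the classes and checking Euler system norm relations is largely formal once Siegel units are understood, but identifying the image of $z_\infty$ under the dual exponential map with $\mathcal{L}_p(E, T)$ is the deep technical heart of Kato's work, requiring a delicate marriage of $p$-adic Hodge theory, Coleman integration, and the interpolation formula for the Mazur--Swinnerton-Dyer $L$-function.
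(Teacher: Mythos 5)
The paper does not prove this theorem---it is imported as a black box from Kato's work, so there is no internal proof to compare against. Your sketch is a recognizably accurate outline of what Kato actually does: Beilinson--Kato zeta elements built from Siegel units on modular curves, the Euler system norm relations, the explicit reciprocity law identifying the Perrin-Riou dual exponential of the zeta element with the Mazur--Swinnerton-Dyer $p$-adic $L$-function, and then the Euler system machinery (Kolyvagin derivatives in Rubin's Iwasawa-theoretic form) to bound the Pontryagin-dual Selmer group, with $p \geq 5$ and surjectivity of $\overline{\rho_{E,p}}$ supplying the big-image hypotheses. You are also right that the reciprocity law is the technical heart; the rest is comparatively formal.

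One nuance is worth flagging, because it is precisely what matters for this paper's application. Kato's Euler system argument, in its raw form, yields a divisibility of characteristic ideals in $\Lambda \otimes \Q_p$, which says nothing about $\mu$-invariants. To obtain the integral divisibility in $\Lambda$ stated in the theorem---and hence the inequality $\mu_E^{\mathrm{alg}} \leq \mu_E^{\mathrm{an}}$ that the paper actually needs---one must show that the zeta element is primitive, i.e.\ that the Euler system is optimally normalized so that no spurious power of $p$ is introduced. This is where the surjectivity of $\overline{\rho_{E,p}}$ enters a second time, in a more delicate way than merely guaranteeing non-degenerate Kolyvagin derivatives: it controls the image of the zeta element modulo $p$. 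Your account folds this into ``big-image conditions demanded by the machinery,'' which is fair for a sketch, but if you are auditing where the argument could leak, the integral refinement of the divisibility is the point to watch.
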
 

        In the CM case, the full main conjecture was proved by Rubin \cite[Theorem 12.3]{rubin-IMC} as a consequence of the Iwasawa main conjecture for imaginary quadratic fields:

        \begin{thm}[Rubin]
		\label{rubin}
		Let $E/\Q$ be an elliptic curve with CM, and let $p$ be an odd prime of good ordinary reduction. The main conjecture is true for $E$. In particular, $\mu_{E}^{\text{alg}} = \mu_{E}^{\text{an}}$ and $\lambda_{E}^{\text{alg}}=\lambda_{E}^{\text{an}}$.
	\end{thm}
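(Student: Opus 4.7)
The plan is to transfer Theorem~\ref{thm: main-thm} to a statement about the $p$-adic $L$-function, and then to a combinatorial estimate on modular symbols.

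\textbf{Reduction to the analytic side.} For non-CM curves, Serre's open image theorem ensures that $\overline{\rho_{E,p}}$ is surjective for all but finitely many $p$, so Kato's divisibility (Theorem~\ref{kato}) gives $\mu_E^{\mathrm{alg}} \leq \mu_E^{\mathrm{an}}$ for all but finitely many good ordinary $p$. In the CM case, Rubin's Theorem~\ref{rubin} yields $\mu_E^{\mathrm{alg}} = \mu_E^{\mathrm{an}}$ at every odd good ordinary prime. Hence it suffices to prove $\mu_E^{\mathrm{an}} \leq 1$, i.e.\ $\mathcal{L}_p(E,T) \notin p^2 \Lambda$, for all but finitely many such $p$.

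\textbf{Modular symbol formula.} By the Mazur--Swinnerton-Dyer construction, $\mathcal{L}_p(E,T)$ is the Amice transform of a $p$-adic measure $\mu_E$ on $\Z_p^{\times}$, whose value on the coset $a + p^{n+1}\Z_p$ is
\[
\mu_E\bigl(a + p^{n+1}\Z_p\bigr) \;=\; \a_p^{-(n+1)}\,\phi_E^+(a/p^{n+1}) \;-\; \a_p^{-(n+2)}\,\phi_E^+(a/p^n),
\]
where $\phi_E^+\colon \Q \to (1/c_E)\Z$ is the plus modular symbol of $E$ (with $c_E$ a fixed integer depending only on $E$). Since $\a_p \in \Z_p^{\times}$, the invariant $\mu_E^{\mathrm{an}}$ equals the minimum $p$-adic valuation over all such coset values. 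Thus the problem reduces to producing, for each large good ordinary $p$, a pair $(a,n)$ for which $v_p\bigl(\phi_E^+(a/p^{n+1})\bigr) \leq 1$ and the subtracted lower-level term does not force cancellation.

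\textbf{Continued-fraction expansion.} I would then apply the Manin trick: for the continued fraction $a/p^{n+1} = [0;c_1,\ldots,c_k]$ with convergents $p_j/q_j$,
\[
\phi_E^+(a/p^{n+1}) \;=\; \phi_E^+(0) \;+\; \sum_{j=1}^{k} (-1)^{j+1}\,\xi\bigl(q_j : -p_j\bigr),
\]
where $\xi(u:v)$ is the Manin symbol indexed by the class of $(u:v)$ in $\mathbb{P}^1(\Z/N_E\Z)$. Each $\xi(u:v)$ lies in a fixed finite set $V_E \subset (1/c_E)\Z$ determined by $E$; the decomposition has $O(\log p^{n+1})$ terms of uniformly bounded size, so $|\phi_E^+(a/p^{n+1})| = O_E(n \log p)$. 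For $p$ large, the condition $v_p(\phi_E^+(a/p^{n+1})) \geq 2$ would therefore force the symbol to vanish outright.

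\textbf{Producing a nonvanishing small-valuation symbol.} The simplest test case $n=0,\,a=1$ gives $\phi_E^+(1/p) = \phi_E^+(0) + \xi(p \bmod N_E : 1)$, a fixed rational depending only on the residue class of $p$ modulo $N_E$. For generic residue classes this value is nonzero, so $v_p(\phi_E^+(1/p)) = 0$ for almost all primes $p$ in those classes. For the finitely many ``bad'' residue classes where the value happens to vanish, I would pass to higher levels $p^n$ and invoke the Hecke eigenvalue relation $(a_p - 2)\,\phi_E^+(0) = \sum_{b=1}^{p-1}\phi_E^+(b/p)$ together with its iterates, showing that if every $\phi_E^+(a/p^{n+1})$ were divisible by $p^2$ then $a_p$ would be forced into a finite list of integers incompatible with the Hasse bound as $n$ grows. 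This final synthesis --- using the continued-fraction decomposition and the Hecke relations to produce, uniformly across residue classes, a nonvanishing modular symbol of small $p$-adic valuation --- is the main technical obstacle, and is precisely where the continued-fraction machinery does its essential work.
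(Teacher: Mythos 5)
Your proposal does not address the statement in question. The statement is Rubin's Theorem (the Iwasawa main conjecture for CM elliptic curves at odd good ordinary primes), which is a deep external result that this paper does not prove; the paper simply cites \cite[Theorem 12.3]{rubin-IMC}, where Rubin deduces it from the two-variable main conjecture for imaginary quadratic fields established via his Euler system of elliptic units. What you have written is instead a proof sketch of Theorem~\ref{thm: main-thm}, the paper's own main result --- and indeed your first paragraph \emph{invokes} Rubin's Theorem~\ref{rubin} as a black box to handle the CM case, which is circular if the goal were to prove that theorem. To prove Rubin's theorem one would need to work with the CM theory: the Grössencharacter attached to $E$, the decomposition of $\mathrm{Sel}(E/\Q^{\mathrm{cyc}})$ over the imaginary quadratic field $K$ of CM, Rubin's Euler system argument bounding the relevant Selmer/class groups, and a comparison of periods and $p$-adic $L$-functions on the $\GL_1/K$ and $\GL_2/\Q$ sides. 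None of that appears in your sketch, nor should it be reproved here; the correct response to this statement is a citation, not a proof.
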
 

    In a beautiful paper, Skinner and Urban \cite[Theorem 1]{skinner-urban} proved the converse divisibility of the main conjecture (under mild conditions). Combined with Kato's theorem, this yields the full main conjecture under mild conditions:

        \begin{thm*}
            Let $E/\Q$ be an elliptic curve, and let $p \geqq 5$ be a prime of good ordinary reduction. Suppose that 
            \begin{enumerate}
                \item the mod $p$ Galois representation $\overline{\rho_{E,p}}: \Gal(\overline{\Q}/\Q) \to \text{GL}_2(\F_p)$ is surjective, and 
                \item there exists a prime $q \neq p$ such that $q$ strictly divides the conductor of $E$ and $\overline{\rho_{E,p}}$ is ramified at $q$.
            \end{enumerate} 
            Then the full main conjecture is true for $(E, p)$.
        \end{thm*}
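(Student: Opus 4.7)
The plan is to combine Kato's divisibility (Theorem \ref{kato}) with its converse. Kato already yields $\char(X(E/\Qcyc)) \mid (\mathcal{L}_p(E,T))$ under hypothesis (1), so it suffices to establish the reverse divisibility $(\mathcal{L}_p(E,T)) \mid \char(X(E/\Qcyc))$ in $\Lambda$. Both divisibilities together force the characteristic ideal of $X(E/\Qcyc)$ to be generated by $\mathcal{L}_p(E,T)$, whence $\mu_{E}^{\mathrm{alg}} = \mu_{E}^{\mathrm{an}}$ and $\lambda_{E}^{\mathrm{alg}} = \lambda_{E}^{\mathrm{an}}$.

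To produce elements of the Selmer group with prescribed annihilators, I would follow the Eisenstein congruence strategy of Ribet and Mazur--Wiles. Since the two-dimensional representation attached to $E$ does not support such a construction directly, the idea is to pass to a higher rank group, following Skinner--Urban. Fix an imaginary quadratic field $K$ in which $p$ splits and work on the quasi-split unitary similitude group $\mathrm{GU}(2,2)/\Q$. To the weight two cusp form $f_E$ attached to $E$ and a suitable algebraic Hecke character $\xi$ of $K$, associate a Klingen-type Eisenstein series $\mathbf{E}_{f_E,\xi}$ on $\mathrm{GU}(2,2)$ whose constant term along the Klingen parabolic factorizes as a product of Rankin--Selberg type $L$-values. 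The central computation is that after $p$-adic interpolation along a Hida family in $\xi$, this constant term realizes (up to a unit and Euler factors at bad primes) the $p$-adic $L$-function $\mathcal{L}_p(E,T)$.

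Having assembled a $p$-adic Eisenstein family $\mathbf{E}$ with constant term $\mathcal{L}_p(E,T)$, the next step is to lift the associated residual eigensystem to the cuspidal locus of the Hida Hecke algebra, producing a cuspidal Hida family $\mathbf{F}$ on $\mathrm{GU}(2,2)$ congruent to $\mathbf{E}$ modulo $\mathcal{L}_p(E,T)$. The four-dimensional Galois representation $\rho_{\mathbf{F}}$ attached to $\mathbf{F}$ is then residually reducible, with semisimplification containing $\rho_{E,p}$ as a subquotient. A Ribet-type lattice argument extracts a non-split extension of Galois representations whose cohomology class lies in the appropriate Selmer group and whose order of annihilation is controlled by the depth of congruence, i.e.\ by $\mathcal{L}_p(E,T)$. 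Analyzing the Fitting ideal of the module of all such classes and translating back from $K$ to $\Q$ via a base change argument delivers the desired divisibility $(\mathcal{L}_p(E,T)) \mid \char(X(E/\Qcyc))$.

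The main obstacle is non-degeneracy at every stage: one must verify that the Klingen--Eisenstein series is non-zero, that the Eisenstein ideal in the cuspidal Hida Hecke algebra is non-trivial so that $\mathbf{F}$ actually exists, and that the resulting Galois cohomology classes satisfy the correct local conditions cutting out $\mathrm{Sel}(E/\Qcyc)$. Hypothesis (1) enters crucially because surjectivity of $\overline{\rho}_{E,p}$ ensures residual irreducibility, which powers the lattice construction and rules out pathological Eisenstein components of the Hecke algebra. Hypothesis (2) is needed to control the local behavior at a bad prime: ramification of $\overline{\rho}_{E,p}$ at a prime $q \neq p$ strictly dividing the conductor of $E$ guarantees a local Galois cohomology element distinguishing the relevant automorphic representations at $q$, and ensures the constructed Selmer classes are not absorbed by Euler factors or trivial zeros at $q$; without such a $q$, the method would only yield divisibility up to an explicit error ideal.
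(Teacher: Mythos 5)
Your proposal matches the paper's treatment: the theorem is cited, not reproved, as the combination of Kato's divisibility (Theorem~\ref{kato}) with the converse divisibility of Skinner--Urban \cite[Theorem 1]{skinner-urban}, and your first paragraph captures exactly this reduction. Your subsequent sketch of the Skinner--Urban construction (Klingen Eisenstein series on $\mathrm{GU}(2,2)$, Hida-theoretic cuspidal congruences, Ribet-style lattice argument, and the roles of hypotheses (1) and (2)) is a fair high-level summary of their argument, though the paper itself does not spell any of this out.
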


        In a recent preprint, Burungale-Castella-Skinner \cite[Theorem 1.1.2]{burungale-castella-skinner} have removed the ramification assumption (2) in the Theorem above.

        \section{A sufficient condition for $\mu \leq 1$}

        \begin{dfn}
		Let $E/\Q$ be an elliptic curve and $f(z)$ be the modular form attached to $E$.
		Given a rational number $r$, define the \emph{modular symbol} $[r] \in \Q$ as follows
		\[ [r] = \dfrac{-\pi i}{\Omega_E} \left( \int_{r}^{i\infty} f(z)\,dz +  \int_{-r}^{i\infty} f(z)\,dz \right). \]	
		Here $\Omega_E$ denotes the Neron period of $E$.
	\end{dfn}

    The value $[r]$ depends on $E$ but we supress this from the notation. 

    \begin{lem}
        \label{lem: L-value-denominator}
        Let $E/\Q$ be an elliptic curve and let $r$ be a rational number.
        \begin{enumerate}
            \item The modular symbol $[r]$ is a rational number.

            \item If $p$ is an odd prime of good reduction and $E[p]$ is irreducible, then the denominator of $[r]$ is coprime to $p$. 

            \item The modular symbol $[r]$ belongs to $\delta_E^{-1} \Z$, where $\delta_E$ is a positive integer independent of $r$.
        \end{enumerate}
            
    \end{lem}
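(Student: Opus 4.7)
My plan is to work with the complex period $\lambda_f(r) := 2\pi i \int_r^{i\infty} f(z)\,dz$, so that
\[ [r] = -\frac{1}{2\Omega_E}\bigl(\lambda_f(r) + \lambda_f(-r)\bigr). \]
Since $f$ has real Fourier coefficients, the change of variables $z \mapsto -\overline{z}$ will give $\lambda_f(-r) = \overline{\lambda_f(r)}$, so this simplifies to $[r] = -\mathrm{Re}(\lambda_f(r))/\Omega_E$. The main geometric input is the modular parametrization $\phi \colon X_0(N) \to E$, normalized so that $\phi^{*}\omega_E = c_E \cdot 2\pi i\, f(z)\,dz$ for the Manin constant $c_E \in \Q^{\times}$; this identifies $c_E\,\lambda_f(r)$ modulo $\Lambda_E$ with the image of the divisor $(r) - (i\infty)$ in $E(\overline{\Q})$ under the Abel--Jacobi map.

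With this setup, (1) will follow from the Manin--Drinfeld theorem: the divisor $(r) - (i\infty)$ is torsion in $J_0(N)$, so $\phi\bigl((r)-(i\infty)\bigr) \in E(\overline{\Q})_{\mathrm{tors}}$, forcing $\lambda_f(r) \in \Q \otimes \Lambda_E$. The real part of an element of $\Q \otimes \Lambda_E$ lies in $\Q \cdot \Omega_E$, and (1) follows.

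For (3), let $n_E$ be the order of the image of the cuspidal subgroup of $J_0(N)$ inside $E(\overline{\Q})$; this is a positive integer depending only on $E$. Then $n_E c_E\,\lambda_f(r) \in \Lambda_E$ uniformly in $r$, and taking real parts and dividing by $\Omega_E$ produces an element of $\delta_E^{-1}\Z$ for a $\delta_E$ built out of $n_E$, $c_E$, and the index of $\Lambda_E \cap \R$ inside $\Omega_E \Z$.

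Part (2) is the main obstacle. My plan is to reduce to the strong Weil curve $E_0$ in the $\Q$-isogeny class of $E$ and then combine two nontrivial ingredients: (i) Mazur's theorem that the Manin constant $c_{E_0}$ is an integer, together with its refinement showing $c_{E_0}$ is a $p$-adic unit whenever $\overline{\rho_{E_0,p}}$ is irreducible and $p$ is a good prime; and (ii) the observation that any $\Q$-isogeny $\psi \colon E_0 \to E$ can be taken to have degree coprime to $p$, since $\overline{\rho_{E_0,p}} = \overline{\rho_{E,p}}$ is irreducible and therefore admits no $G_{\Q}$-stable line, so any $[p]$-factor of $\psi$ may be cancelled. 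Under such an isogeny the period ratio $\Omega_E/\Omega_{E_0}$ is a $p$-adic unit, so the $p$-integrality of modular symbols for $E_0$ (which follows once $c_{E_0}$ is a $p$-unit, using the integral $\Z$-structure on $H^1(X_0(N),\Z)$ restricted to the $f$-isotypic piece) transfers to the modular symbols for $E$. This is essentially the argument of Greenberg--Vatsal \cite[Proposition 3.7]{greenberg-vatsal}.
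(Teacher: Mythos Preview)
Your sketch is correct and in fact considerably more detailed than the paper's own treatment: the paper does not prove this lemma at all but simply cites Manin, Drinfeld, and Mazur--Rubin for the three parts respectively. The route you outline---Manin--Drinfeld for rationality via the modular parametrization and torsion of cuspidal divisors, a uniform torsion bound for the bounded denominator in (3), and the Greenberg--Vatsal argument via $p$-integrality of the Manin constant and a prime-to-$p$ isogeny to the strong Weil curve for (2)---is exactly the standard argument underlying those references, so there is no substantive divergence to discuss.

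One small point worth tightening in your write-up of (2): to conclude $p$-integrality of the modular symbols for the strong Weil curve $E_0$ from the $p$-integrality of $c_{E_0}$, you implicitly also need that the image of the relevant cuspidal or torsion contribution in $E_0$ has order prime to $p$. This follows from the same irreducibility hypothesis (a rational $p$-torsion point would give a $G_{\Q}$-stable line in $E_0[p]$), but it is a separate ingredient from the Manin-constant statement and is worth making explicit.
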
 

    \begin{proof}
        The first two claims are well-known theorems of Manin \cite{manin} and Drinfeld \cite{drinfeld}. The last claim is also well-known (see for example \cite[Lemma 1.2 (i)]{mazur-rubin} for a proof).  
    \end{proof}

    Let $\mu_{p-1}$ denote the $(p-1)$-st roots of unity in $ \Z_p^{\times}$. If $\a \in \Z_p^{\times}$ is a $p$-adic unit, then let $[\a / p^n]$ denote $[a/p^n]$ where $0 < a < p^n$ is integer satisfying $a \equiv \a$ mod $p^n$.     
    
    Here is the main result of this section:

        \begin{prop}
            \label{prop: sufficient-conditions}
            Let $E/\Q$ be an elliptic curve and let $p$ be an odd prime of good ordinary reduction. Assume that $E[p]$ is irreducible. Furthermore if $E$ is not CM assume that the mod $p$ Galois representation $\overline{\rho_{E,p}}: G_{\Q} \to \text{GL}_2(\F_p)$ is surjective. Let $r$ be a non-negative integer. 
            
            Suppose that there exists a positive integer $n$ and a residue class $a$ modulo $p^{n}$ such that 
            \begin{equation}
            \label{eqn: suff-cond}
                \sum_{\eta \in \mu_{p-1}} a_p \left[ \dfrac{\eta a}{p^{n}} \right] -  \left[ \dfrac{\eta a}{p^{n-1}} \right] \not\equiv 0 \mod p^{r+1}.
            \end{equation}
            Then $\mu^{\text{alg}}_E \leq r$. 
        \end{prop}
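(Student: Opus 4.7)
The plan is to bound the analytic $\mu$-invariant $\mu_E^{\text{an}}$ and then transfer the bound to the algebraic side. Under the hypotheses of the proposition, Theorem~\ref{kato} (in the non-CM case, via the surjectivity of $\overline{\rho_{E,p}}$) or Theorem~\ref{rubin} (in the CM case) yields $\mu_E^{\text{alg}} \leq \mu_E^{\text{an}}$. By Weierstrass preparation, the inequality $\mu_E^{\text{an}} \leq r$ is equivalent to the statement $p^{r+1} \nmid \mathcal{L}_p(E, T)$ in $\Lambda$, so it suffices to exhibit some finite-level projection of $\mathcal{L}_p(E, T)$ that is nonzero modulo $p^{r+1}$.

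For this I would describe $\mathcal{L}_p(E, T)$ via Mazur--Tate elements. For each $n$, consider
\[ \theta_n := \sum_{a \in (\Z/p^n\Z)^{\times}} [a/p^n] \, \sigma_a \in \Q[(\Z/p^n\Z)^{\times}], \]
where $\sigma_a$ corresponds to $a$ via the cyclotomic character. The key input is the modular-symbol distribution relation
\[ a_p \left[ \frac{b}{p^n} \right] - \left[ \frac{b}{p^{n-1}} \right] = \sum_{i=0}^{p-1} \left[ \frac{b + ip^n}{p^{n+1}} \right], \]
valid for $b$ coprime to $p$ and following from the Hecke eigen-equation $T_p f = a_p f$ applied to the definition of $[r]$. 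This identity makes the normalized elements $\a_p^{-n} \theta_n$ compatible under the natural pushforward maps, and the interpolation property~\eqref{eqn: interpolation-property} identifies their inverse limit in $\Z_p\llbracket \Z_p^{\times}\rrbracket \otimes \Q_p$ with $\mathcal{L}_p(E, T)$ after projecting along $\Z_p^{\times} \twoheadrightarrow \Z_p^{\times}/\mu_{p-1} \cong \Gamma$. Tracking the coefficient at the image of a fixed $a$ through the pushforward from level $p^{n+1}$ to level $p^n$, the distribution relation converts $\sum_{i=0}^{p-1}[\,(a+ip^n)/p^{n+1}\,]$ into $a_p [a/p^n] - [a/p^{n-1}]$; summing over $\eta \in \mu_{p-1}$ to account for the Teichmüller fiber of the projection to $\Gamma$ produces (up to the $p$-adic unit $\a_p^{-n}$) exactly the left-hand side of hypothesis~\eqref{eqn: suff-cond}.

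Since $\a_p \in \Z_p^{\times}$, non-vanishing of this expression modulo $p^{r+1}$ is equivalent to non-vanishing of the corresponding coefficient of $\mathcal{L}_p(E, T)$ modulo $p^{r+1}$, yielding $\mu_E^{\text{an}} \leq r$ and hence $\mu_E^{\text{alg}} \leq r$ by the first step. The main obstacle in executing this plan is the bookkeeping required to pin down the coefficient precisely: one must keep track of the $\a_p$-normalization, use the distribution relation to align levels $n-1$ and $n$, and verify that the sum over $\eta \in \mu_{p-1}$ genuinely implements the projection $\Z_p^{\times} \twoheadrightarrow \Gamma$ after Teichmüller decomposition. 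No single step is deep, but each must be done with care to land on the exact expression~\eqref{eqn: suff-cond}.
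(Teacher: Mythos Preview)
Your outline is the paper's argument: pass to $\mu_E^{\text{an}}$ via Kato (non-CM) or Rubin (CM), write $\mathcal{L}_p(E,T)$ as the inverse limit of the Mazur--Tate elements (the paper cites \cite[Ch.~I, \S10]{mazur-tate-teitelbaum} directly rather than rederiving the compatibility from the $T_p$-distribution relation, but that is the same content), and read off the coefficient at a fixed $\gamma_a$ by summing over the Teichm\"uller fiber $\mu_{p-1}\subset\Z_p^{\times}$.

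Two places where the paper is explicit and you are not. First, to make the congruence \eqref{eqn: suff-cond} meaningful and to compare with the coefficients of $\mathcal{L}_p^{(n)}$ in $\Z_p$, one needs that the modular symbols $[\eta a/p^m]$ have denominator prime to $p$; the paper invokes Lemma~\ref{lem: L-value-denominator}(2), which is exactly where the irreducibility of $E[p]$ enters. Second, the coefficient of $\gamma_a$ in $\mathcal{L}_p^{(n)}$ is, up to the unit $\a_p^{-(n+1)}$, the sum $\sum_\eta(\a_p[\eta a/p^n]-[\eta a/p^{n-1}])$ with the \emph{unit root} $\a_p$, whereas the distribution relation you quote produces $a_p$. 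Your sentence ``up to the $p$-adic unit $\a_p^{-n}$ \dots exactly the left-hand side of \eqref{eqn: suff-cond}'' elides this: $(\a_p-a_p)\sum_\eta[\eta a/p^n]=-\beta_p\sum_\eta[\eta a/p^n]$ is divisible by $p$ but is not absorbed by a unit. The paper handles this by observing $\a_p\equiv a_p\pmod p$ and using the integrality just mentioned. You should make both points explicit rather than fold them into ``bookkeeping''.
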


        \begin{proof}
        By Kato's Theorem in the non-CM case or Rubin's Theorem in the CM case, to show that $\mu^{\text{alg}}_E \leq r$, it suffices to show that $p^{r+1}$ does not divide the $p$-adic $L$-function $\mathcal{L}_p(E, T)$. After choosing a topological generator for $\Gal(\Qcyc/\Q)$, there is an isomorphism
        \[ \Zpt \simeq \varprojlim_n \Z_p [\Gal (\Q_n/\Q) ]. \] 
        We let $(\mathcal{L}_p^{(n)}(E))_{n \geqq 1}$ be the image of $\mathcal{L}_p(E, T)$ under this isomorphism. Thus for each $n \geqq 1$ the element $\mathcal{L}_p^{(n)}(E)$ belongs to the group ring $\Z_p [\Gal (\Q_n/\Q) ]$. 

        For an element $a \in \Z_p^{\times}$, let $\sigma_a$ denote the corresponding element of $\Gal(\Q(\z_{p^{\infty}})/\Q)$ and let $\gamma_a$ be the image of $\s_a$ in $\Gal(\Qcyc/\Q)$. By \cite[Chapter I, Section 10]{mazur-tate-teitelbaum}, for each $n \geqq 1$ we have 
        \begin{equation}
            \label{eqn: mtt}
        \mathcal{L}^{(n)}_p(E) = \sum_{\substack{1 \leq a < p^{n} \\ (a,p) = 1}} \left( \dfrac{1}{\a_p^{n}} \left[ \dfrac{a}{p^{n}} \right] - \dfrac{1}{\a_p^{n+1}} \left[ \dfrac{a}{p^{n-1}} \right] \right) \cdot \gamma_a, 
        \end{equation}
        where $\a_p$ is the unit eigenvalue of Frobenius at $p$.  

        The $p$-adic $L$-function $\mathcal{L}_p(E, T)$ is nonzero mod $p^{r+1}$ if and only if there exists a positive integer $n$ such that $\mathcal{L}^{(n)}_p(E)$ is nonzero mod $p^{r+1}$. This is true precisely if there exists a positive $n$ and a residue class $a$ modulo $p^n$ such that 
        \begin{align}
            \label{eqn1}
            \sum_{\eta \in \mu_{p-1}} \dfrac{1}{\a_p^{n}} \left[ \dfrac{\eta a}{p^{n}} \right] - \dfrac{1}{\a_p^{n+1}} \left[ \dfrac{\eta a}{p^{n-1}} \right] \not\equiv 0 \mod p^{r+1}.
        \end{align}
        (Note that if $a, a' \in \Z_p^{\times}$, we have $\gamma_a = \gamma_{a'}$ if and only if $a = \eta a'$ for some $\eta \in \mu_{p-1}$.) Multiplying (\ref{eqn1}) through by $\a_p^{n+1}$, we conclude that if there exists $n$ and $a$ such that  
        \[  \sum_{\eta \in \mu_{p-1}} \a_p \left[ \dfrac{\eta a}{p^{n}} \right] -  \left[ \dfrac{\eta a}{p^{n-1}} \right] \not\equiv 0 \mod p^{r+1} \]
        then $\mu_E^{\text{alg}} \leq r$. Since $E[p]$ is irreducible and $p$ is an odd prime of good reduction, the modular symbols of $E$ all have denominator coprime to $p$ by Lemma~\ref{lem: L-value-denominator}. Since $\a_p \equiv a_p$ mod $p$, we can replace $\a_p$ with $a_p$ in the above formula. This proves Proposition~\ref{prop: sufficient-conditions}.
        \end{proof}

        By Serre's open image theorem, if $E$ is non-CM, the mod $p$ Galois representation of $E$ is surjective for all but finitely many primes. If $E$ is CM, it is known (for example by Mazur's theorem on isogenies of prime degree) that $E[p]$ is irreducible for all but finitely many primes. Therefore by Proposition~\ref{prop: sufficient-conditions}, to prove Theorem~\ref{thm: main-thm} it suffices to show:

        \begin{thm}
            \label{thm: actual-main-thm}
            Let $E/\Q$ be an elliptic curve. Then for all but finitely many primes $p$, there exists a positive integer $n$ and a residue class $a$ modulo $p^{n}$ such that 
            \[ \sum_{\eta \in \mu_{p-1}} a_p \left[ \dfrac{\eta a}{p^{n}} \right] -  \left[ \dfrac{\eta a}{p^{n-1}} \right] \not\equiv 0 \mod p^{2}. \]
        \end{thm}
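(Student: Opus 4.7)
I would prove Theorem~\ref{thm: actual-main-thm} by combining the Hecke-eigenvalue relation for modular symbols with a continued-fraction decomposition. The Hecke relation $a_p[r]=[pr]+\sum_{i=0}^{p-1}[(r+i)/p]$ applied to $r=\eta a/p^n$ (for $\eta a\in\Z_p^\times$) gives
\[ a_p\left[\frac{\eta a}{p^n}\right]-\left[\frac{\eta a}{p^{n-1}}\right] \;=\; \sum_{i=0}^{p-1}\left[\frac{\eta a+ip^n}{p^{n+1}}\right], \]
so summing over $\eta\in\mu_{p-1}$ the quantity in Theorem~\ref{thm: actual-main-thm} becomes $S(a,n):=\sum_\eta\sum_{i=0}^{p-1}[(\eta a+ip^n)/p^{n+1}]$, an explicit sum of $p(p-1)$ modular symbols of denominator $p^{n+1}$.

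For curves with $L(E,1)\neq 0$ I would first take $n=1$. Writing $[0]:=L(E,1)/\Omega_E$, a direct Hecke computation (using $\sum_{b=1}^{p-1}[b/p]=(a_p-2)[0]$ and the identity $[\eta a/p^0]=[0]$ from $\Z$-periodicity of $f$) yields
\[ S(a,1) \;=\; \bigl((a_p-1)^2-p\bigr)\,[0], \]
independent of $a$. Since $[0]$ is a fixed nonzero rational, it is a $p$-adic unit for all but finitely many $p$; a case split on whether $a_p\equiv 1\pmod p$ (the anomalous sub-case $a_p=1$, forced for $p\geq 5$ by the Hasse bound, produces $S(a,1)=-p\,[0]$ of $p$-adic valuation exactly one) then forces $v_p(S(a,1))\leq 1$, completing the proof for all analytic-rank-zero curves outside a finite set of primes.

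For curves with $L(E,1)=0$ the identity above vanishes, and one must pass to $n\geq 2$; this is where continued fractions enter. By the Manin relations, each summand $[c/p^{n+1}]$ decomposes as $\sum_k (\pm)\,\xi_N(B_{k-1},B_k)$, where $A_k/B_k$ are the continued-fraction convergents of $c/p^{n+1}$, $N=\mathrm{cond}(E)$, and $\xi_N(u,v)$ is the level-$N$ Manin symbol depending only on $(u:v)\in\mathbf{P}^1(\Z/N)$. Since $\mathbf{P}^1(\Z/N)$ is finite and each $\xi_N$ is a fixed rational number, for $p$ outside a finite set every nonzero Manin symbol is a $p$-adic unit, and $S(a,n)$ becomes an integer linear combination $\sum_{(u:v)} n^{(a,n)}_{(u:v)}\,\xi_N(u,v)$ whose coefficients count (with signs) occurrences of each class $(u:v)$ as a convergent pair among the $p(p-1)$ summand rationals.

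The main obstacle is showing this combination is nonzero modulo $p^2$ for some $(a,n)$. The mod-$p$ analogue admits elementary pigeonhole arguments, but the mod-$p^2$ refinement requires tracking second-order cancellations. I would argue by contradiction via Fourier inversion on $(\Z/p^n)^\times$: uniform $p^2$-divisibility of every $S(a,n)$ would force uniform $p^2$-divisibility of every twisted sum $\sum_a\chi(a)[a/p^n]$, hence by (\ref{eqn: interpolation-property}) of every twisted special value $L(E,\bar\chi,1)/\Omega_E$ (up to Gauss-sum factors and powers of $\alpha_p$). Invoking Rohrlich's nonvanishing theorem along the cyclotomic tower (and its CM analog) produces a character $\chi$ with nonzero algebraic twisted $L$-value; the continued-fraction bookkeeping then controls its denominator uniformly in $\chi$, so that for $p$ outside a finite set the value is a $p$-adic unit, yielding the required contradiction.
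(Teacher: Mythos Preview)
Your treatment of the case $L(E,1)\neq 0$ is correct and is genuinely different from the paper's argument: the paper does not split cases, whereas your explicit evaluation
\[
\sum_{\eta}\Bigl(a_p\Bigl[\tfrac{\eta a}{p}\Bigr]-\bigl[\eta a\bigr]\Bigr)=\bigl((a_p-1)^2-p\bigr)\,[0]
\]
at $n=1$, together with the Hasse bound forcing $a_p=1$ in the anomalous sub-case, gives a clean self-contained proof for analytic-rank-zero curves.

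The case $L(E,1)=0$, however, has a real gap. Your contradiction argument ends with: Rohrlich produces a character $\chi$ of $p$-power conductor with $L(E,\bar\chi,1)\neq 0$, and then ``the continued-fraction bookkeeping \dots\ so that for $p$ outside a finite set the value is a $p$-adic unit.'' But nonvanishing of $g(\chi)L(E,\bar\chi,1)/\Omega_E$ says nothing about its $p$-adic valuation: this algebraic number lives in $\Z[\zeta_{p^n}]$ (up to a fixed denominator), and there is no mechanism in your sketch preventing it from being divisible by $p^2$. That it is \emph{not} highly $p$-divisible is essentially a restatement of the $\mu$-invariant bound you are trying to prove. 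Expressing it as a $\Z[\zeta_{p^n}]$-linear combination of finitely many Manin symbols $\xi_N(u,v)\in\Q$ that are individually $p$-units does not help, since linear combinations of units need not be units.

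The paper closes exactly this gap by never leaving $\Q$. The point is that each $S(a,n)$ is itself rational, and the continued-fraction bound (Lam\'e plus Manin's trick) gives an \emph{archimedean} estimate $[r]\ll_N \log(\mathrm{denom}\,r)$, whence in the original $(p-1)$-term form
\[
|S(a,n)|\ \ll_N\ (p-1)\cdot \sqrt{p}\cdot n\log p.
\]
If some $S(a,n)\neq 0$ with $n$ bounded \emph{independently of $p$}, then $\delta_E\,S(a,n)$ is a nonzero rational integer of absolute value $<p^2$ for $p\gg 0$, hence not divisible by $p^2$. The uniformity of $n$ is supplied not by Rohrlich (whose $n_0$ depends on $p$) but by Chinta's theorem, which gives $n_0=n_0(N)$. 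Note also that your Hecke rewriting into $p(p-1)$ summands of denominator $p^{n+1}$ would only yield $|S(a,n)|\ll_N p^2 n\log p$, which is too weak; one must bound the original expression with its $p-1$ terms and the factor $|a_p|\leq 2\sqrt{p}$.
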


        The rest of the paper is devoted to proving Theorem~\ref{thm: actual-main-thm}.     

        \section{Continued Fractions}

        Here is the main result of this section:

    \begin{prop}
        \label{prop: period-integrals-estimate}
        Let $E/\Q$ be an elliptic curve of conductor $N$ and let $r$ be a rational number. Then 
        \[ [r] \ll_N \log (\mathrm{denominator}(r)) + O_N(1). \]
    \end{prop}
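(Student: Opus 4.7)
The natural approach, as suggested by the section title, is the Manin continued-fraction decomposition. Given a rational $r$ with $b = \text{denominator}(r)$, expand $r = [q_0; q_1, \ldots, q_m]$ as a simple continued fraction with convergents $c_k/d_k$ (with the conventions $c_{-1}/d_{-1} = 1/0$ and $c_m/d_m = r$). The classical identity $c_k d_{k-1} - c_{k-1} d_k = (-1)^{k-1}$ implies that $\gamma_k := \begin{pmatrix} c_k & c_{k-1} \\ d_k & d_{k-1}\end{pmatrix}$ lies in $\GL_2(\Z)$ and sends $i\infty \mapsto c_k/d_k$ and $0 \mapsto c_{k-1}/d_{k-1}$. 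After a trivial adjustment (negating a column when the determinant is $-1$) we may assume $\gamma_k \in \SL(\Z)$. Splitting the geodesic from $r$ to $i\infty$ along the convergents yields the telescoping decomposition
\[ \int_r^{i\infty} f(z)\,dz \;=\; \sum_{k=0}^{m} \int_{c_k/d_k}^{c_{k-1}/d_{k-1}} f(z)\,dz, \]
a sum of $m+1$ integrals along unimodular paths.

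After the change of variable $z = \gamma_k w$, each summand takes the form $\pm\int_0^{i\infty} f(\gamma_k w)\,d(\gamma_k w)$. Because $f$ is a weight-two cusp form for $\Gamma_0(N)$, the value of $\int_0^{i\infty} f(\gamma w)\,d(\gamma w)$ depends on $\gamma \in \SL(\Z)$ only through its class in the finite coset space $\Gamma_0(N)\backslash \SL(\Z)$, which has order $[\SL(\Z):\Gamma_0(N)] = N\prod_{p\mid N}(1+1/p)$, and each such integral is finite by cuspidality. Setting
\[ C_N := \max_{\gamma \in \Gamma_0(N)\backslash \SL(\Z)} \left| \dfrac{\pi}{\Omega_E}\int_0^{i\infty} f(\gamma z)\,d(\gamma z) \right|, \]
we obtain the uniform bound $\bigl|\tfrac{\pi}{\Omega_E}\int_r^{i\infty} f\,dz\bigr| \leq (m+1)\, C_N$, with $C_N$ depending only on $E$ (and thus on $N$).

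To finish, I would invoke the elementary fact that $m \ll \log b$: the denominators of convergents grow at least geometrically, since $d_k \geq F_k$ (Fibonacci), so $m = O(\log b)$. The same analysis applied to $-r$, whose denominator also equals $b$, bounds the second integral in the definition of $[r]$. Summing the two contributions yields $|[r]| \leq 2(m+1)\, C_N \ll_N \log(\text{denominator}(r)) + O_N(1)$, as required. The only conceptual subtlety is the uniform bound on unimodular-path periods by a constant depending only on $N$; this rests on the $\Gamma_0(N)$-modularity of $f\,dz$ together with the finiteness of the coset space $\Gamma_0(N)\backslash\SL(\Z)$, and is where all the $N$-dependence in the implicit constant enters.
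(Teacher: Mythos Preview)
Your proof is correct and follows essentially the same route as the paper: Manin's continued-fraction decomposition of the path from $r$ to $i\infty$ into unimodular pieces, the observation that each piece depends only on a coset in $\Gamma_0(N)\backslash\SL(\Z)$ and is thus bounded by a constant depending only on $N$, and the logarithmic bound on the number of convergents (the paper cites Lam\'e's theorem, which is the same Fibonacci argument you give). Your write-up is in fact slightly cleaner, since you bound the $m+1$ terms directly rather than grouping by cosets and then overcounting by the index.
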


    To prove this, we need some background from continued fractions. 

        \subsection{Recollections from the theory of continued fractions} A good reference is the introduction of \cite{goswami}. Any real number $x \in (0, \infty)$ has a continued fraction expansion defined as follows. Put $x_0=x$ and let $a_0 = [x_0]$, where $[\cdot]$ denotes the integer part (i.e: the `floor' function). If $x_1 = x_0 - [x_0]=0$, the process terminates there and $x=a_0$. If not, then we let $a_1 = \left[ \dfrac{1}{x_1} \right]$. Again, if $x_2 = \dfrac{1}{x_1}-\left[ \dfrac{1}{x_1} \right]=0$, the process stops and we have $x=a_0 + \dfrac{1}{a_1}$; if not, we let $a_2 = \left[ \dfrac{1}{x_2} \right]$, and so on. It is easy to see that if $x$ is a positive rational, then the process terminates after a finite number of steps, and in that case we can write $x$ as a continued fraction:
	
	\[ x= a_0 + \dfrac{1}{a_1 + \dfrac{1}{a_2 + \dfrac{1}{\dots + \dfrac{1}{a_n}}}} = [a_0; a_1, a_2, \dots, a_n].\]
	
	Writing $x_k = [a_0; a_1, \dots, a_k]$ for $k \leqq n$, we call $x_k$ the \emph{$k^{th}$ convergent} of $x$. 

        \begin{dfn}
        For a rational number $r$, let $\ell(r)$ denote the length of the continued fraction expansion of $r$; i.e: the number of convergents of $r$. For example, we have $2/3 = [0; 1,2]$, so $\ell(2/3) = 3$. 
        \end{dfn}

        \subsection{The Euclidean Algorithm} The Euclidean Algorithm takes two positive integers $a,b$ and calculates their GCD. The process is demonstrated in the example below, where $a = 56, b = 154$:    
        \begin{align*}
            154 &= \mathbf{2} \cdot 56 + 42 \\
            56 &= \mathbf{1} \cdot 42 + 14 \\
            42 &= \mathbf{3} \cdot 14,
        \end{align*}
        so $\mathrm{gcd}(a,b) = 14$ (= the right-most number in the last equation). The bolded numbers on the RHS are exactly the continued fraction coefficients of $a/b$, so 
        \[ 56/154 = [0;\mathbf{2},\mathbf{1},\mathbf{3}]. \]  
        In particular, for any rational number $a/b$, we have $\ell(a/b) = N + 1$, where $N$ is the number of steps in the Euclidean algorithm for the input $(a,b)$. 
        
        It is a fundamental theorem of Gabriel Lam\'e (see \cite[Page 54]{honsberger}) that the number of steps in the Euclidean algorithm for the input $(a,b)$ (where $0 \leqq a \leqq b$) is $\leqq 5 \log_{10}(b)$. So for any rational number $a/b$ with $0 \leqq a \leqq b$, 
        \[ \ell(a/b) \leqq 1 + 5 \log_{10} b \leqq 1 + \dfrac{5}{2} \log b, \]
        where in the last inequality $\log b$ denotes the usual base-$e$ logarithm. We have shown: 

    \begin{lem}
        \label{lem: continued-fractions}
        If $r$ is a  rational number, then 
        \[ \ell(r)  \leqq 1 + \dfrac{5}{2} \log (\mathrm{denominator}(r)).  \]
    \end{lem}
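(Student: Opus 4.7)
The plan is to identify the continued-fraction expansion of a rational with the sequence of quotients produced by the Euclidean algorithm, and then invoke Lamé's classical theorem bounding the number of steps in that algorithm.

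First I would reduce to the case $r = a/b$ in lowest terms with $0 \le a \le b$ and $b = \mathrm{denominator}(r) \ge 1$. A general rational $r$ (possibly negative, or with $|r| > 1$) differs from this case only by the operations $r \mapsto -r$ and $r \mapsto r - \lfloor r \rfloor$, each of which alters $\ell(r)$ by at most an additive constant; that constant is absorbed into the leading ``$+1$'' of the target bound. So there is no loss in assuming $0 \le a \le b$.

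The key observation is then that the continued-fraction algorithm and the Euclidean algorithm coincide step-by-step on the pair $(a,b)$: the first partial quotient is $\lfloor a/b \rfloor = q$, where $a = qb + a'$ with $0 \le a' < b$, and the next fraction to process in the continued-fraction procedure is $b/a'$, which is exactly the pair appearing on the next line of the Euclidean algorithm. By induction on the input size, one concludes $\ell(a/b) = N + 1$, where $N$ is the number of division steps performed by the Euclidean algorithm on $(a,b)$. This is precisely the identification illustrated in the worked example preceding the lemma.

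Finally I would invoke Lamé's theorem: for the Euclidean algorithm on $(a,b)$ with $0 \le a \le b$, one has $N \le 5 \log_{10}(b)$. The classical proof (\cite[Page 54]{honsberger}) shows by induction that $N$ steps force $b \ge F_{N+1}$, the $(N+1)$-st Fibonacci number; then $F_k \ge \phi^{k-2}$ with $\phi = (1+\sqrt{5})/2$ gives $N \le \log b / \log \phi$, and the numerical estimate $1/\log_{10}\phi < 5$ delivers the stated base-$10$ bound. Converting base-$10$ to base-$e$ via $\log 10 > 2$ yields
\[ \ell(a/b) \;\le\; 1 + 5 \log_{10}(b) \;\le\; 1 + \tfrac{5}{2} \log b, \]
which is the claim. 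No serious obstacle arises here: the entire argument goes through once Lamé's bound (a purely combinatorial statement about integer division) is cited; the only real bookkeeping is confirming that the CF and Euclidean algorithms really do produce the same sequence of quotients on $(a,b)$, and that edge cases ($r < 0$, $r > 1$, $r = 0$) cost at most $O(1)$ in $\ell$.
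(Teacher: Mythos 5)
Your proof is correct and takes essentially the same route as the paper: identify the continued-fraction partial quotients with the quotients in the Euclidean algorithm, so that $\ell(a/b) = N+1$, and then cite Lam\'e's theorem $N \le 5\log_{10}(b)$ together with $\log 10 > 2$. You also take care to address the reduction to $0 \le a \le b$, a case the paper silently assumes, and you sketch the Fibonacci-number proof of Lam\'e's bound, which is a harmless expansion.
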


	\begin{proof}[Proof of Proposition~\ref{prop: period-integrals-estimate}]

    Recall that $[r]$ is defined as
    \[ [r] = \dfrac{\pi i}{\Omega_E} \left( \int_{r}^{i\infty} + \int_{-r}^{i\infty} f(z) \, dz \right). \]
    So to prove the Proposition, it suffices to show that for rational numbers $r$, 
    \[ \int_{r}^{i\infty} f(z) \, dz \ll_N \log(\mathrm{denominator}(r)) + O_N(1).  \]
    To do this, we use an observation of Manin that continued fractions are a valuable tool in studying modular symbols \cite[Proposition 1.6]{manin}. Given a rational number $r$, consider the successive convergents of $r$ in lowest terms:
	\[ \dfrac{b_{-2}}{a_{-2}} = \dfrac{0}{1}, \dfrac{b_{-1}}{a_{-1}} = \dfrac{1}{0}, \dfrac{b_0}{a_0} = \dfrac{b_0}{1}, \, \dots, \, \dfrac{b_{n-1}}{a_{n-1}}, \dfrac{b_n}{a_n}=\dfrac{b}{a},  \]
	where the first two are included formally. Then we can write:
	\begin{equation}
		\label{integral0}
		\int_{i\infty}^{r} f(z) \, dz = \int_{b_{-2}/a_{-2}}^{b_{-1}/a_{-1}} + 
		\int_{b_{-1}/a_{-1}}^{b_{0}/a_{0}} + \dots + \int_{b_{n-1}/a_{n-1}}^{b_{n}/a_{n}} f(z) \, dz.
	\end{equation}
	Now comes the key step: by \cite[Proof of Proposition 3.11]{stein} since the numbers $b_i/a_i$ are convergents, they satisfy $b_ka_{k-1} - b_{k-1}a_k = (-1)^{k-1}$ so that
	\begin{equation*}
		g_k = 
		\begin{pmatrix}
			b_k & (-1)^{k-1}b_{k-1} \\
			a_k & (-1)^{k-1}a_{k-1} 
		\end{pmatrix}
		\in SL_2(\Z).
	\end{equation*}  
	We can now write  $\frac{b_k}{a_k} = g_k(\infty) = g_{k+1}(0).$ Inserting this  into the integral (\ref{integral0}), we get
	\begin{equation}
		\label{eqn: integral}
		\int_{i\infty}^{r} f(z) \, dz =  \int_{g_{-1}(0)}^{g_{-1}(\infty)} + 
		\int_{g_0(0)}^{g_0(\infty)} + \dots + \int_{g_n(0)}^{g_n(\infty)} f(z) \, dz  
	\end{equation}
	Because $f(z) \, dz$ is a cusp form for $\Gamma_0(N)$, it follows that the differential form $f(z) \, dz$ is $\Gamma_0(N)$-invariant. Therefore, for any $\a, \b \in \mathbf{P}^1(\Q)$ and any $\gamma \in \Gamma_0(N)$, we have
	\[\int_{\a}^{\b} f(z) \, dz = \int_{\gamma \cdot \a}^{\gamma \cdot \b} f(z) \, dz.  \]
	This implies that the $k$-th integral $\int_{g_{k}(0)}^{g_{k}(\infty)}$ in (\ref{eqn: integral}) only depends on the \textit{equivalence class} of $g_k$ in $SL_2(\Z) / \Gamma_0(N)$. So for any coset $U \in SL_2(\Z) / \Gamma_0(N)$, if we let $c_U$ denote the number of matrices $g_k$ belonging to the coset $U$
	\[ c_U(r) = \#\{i: g_i \in U\},  \] 
	then we can rewrite equation (\ref{eqn: integral}) as follows:
	\begin{equation}
		\label{eqn: integral2}
		\int_{i\infty}^{r} f(z) \, dz = \sum_{U \in SL_2(\Z) / \Gamma_0(N)} c_U(r) \int_{U(0)}^{U(\infty)} f(z) \, dz. 
	\end{equation}
	On the RHS, the notations $U(0)$ and $U(\infty)$ are independent of the choice of representative of $U$. For all $U$ we trivially have $c_U(r) \leqq \ell(r) + 2$. By Lemma~\ref{lem: continued-fractions}, we have for all $U$:
    \begin{equation}
        \label{eqn: trivial-bound}
        c_U(r) \leqq 3 + \dfrac{5}{2} \log (\mathrm{denominator}(r)).
    \end{equation}
    Therefore (\ref{eqn: integral2}) gives us 
    \[ \left| \int_{i\infty}^{r} f(z) \, dz \right| \leqq \left(3 + \frac{5}{2} \log (\mathrm{denominator}(r)) \right) \sum_{U}  \left| \int_{U(0)}^{U(\infty)} f(z) \, dz \right|. \]
    Pick a constant $C = C(N)$ depending only on $N$ (and not on $r$) such that $\left| \int_{U(0)}^{U(\infty)} f(z) \, dz \right| \leqq C$ for all $U$. Then 
        \begin{align*}
            \left| \int_{i\infty}^{r} f(z) \, dz \right|
            &\leqq C \left(3 + \frac{5}{2} \log (\mathrm{denominator}(r)) \right) [SL_2(\Z): \Gamma_0(N)] \\
            &\ll_N \log (\mathrm{denominator}(r)) + O_N(1).
        \end{align*}
    This completes the proof of Proposition~
    \ref{prop: period-integrals-estimate}.  
        \end{proof}

    \section{A Proposition}

    Here is the main result of this section.
        
        \begin{prop}
            \label{prop: chinta}
            Let $E/\Q$ be an elliptic curve of conductor $N$ and let $p$ be an odd prime of good ordinary reduction. There exists a positive integer $n_0$ depending only on $N$ (but not on $p$) such that for all $n \geqq n_0$, there exists a residue class $a$ modulo $p^n$ such that   
            \[ \sum_{\eta \in \mu_{p-1}} a_p\left[ \dfrac{\eta a}{p^n} \right] -  \left[ \dfrac{\eta a}{p^{n-1}} \right] \neq 0. \]  
        \end{prop}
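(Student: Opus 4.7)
The plan is to translate the claim into a divisibility condition on $\mathcal{L}_p(E, T)$ in the Iwasawa algebra, and then to invoke a uniform-in-$p$ bound on the analytic $\lambda$-invariant of $E$ at $p$.

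\emph{Step 1 (reformulation).} First I regroup the Mazur--Tate--Teitelbaum formula (\ref{eqn: mtt}) according to the equivalence $\gamma_a = \gamma_{\eta a}$ for $\eta \in \mu_{p-1}$. The coefficient of $\gamma_a$ in $\alpha_p^{n+1} \cdot \mathcal{L}_p^{(n)}(E)$ is then exactly $\sum_{\eta \in \mu_{p-1}}(\alpha_p [\eta a/p^n] - [\eta a/p^{n-1}])$. Since $\alpha_p \equiv a_p \pmod{p}$ and the modular symbols are $p$-integral under our hypotheses (Lemma~\ref{lem: L-value-denominator}), the displayed sum in the proposition is non-zero for some residue class $a$ if and only if $\mathcal{L}_p^{(n)}(E) \neq 0$ in $\Z_p[\Gal(\Q_n/\Q)]$.

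\emph{Step 2 (Weierstrass).} Under the Iwasawa isomorphism $\Lambda \cong \varprojlim_n \Z_p[\Gal(\Q_n/\Q)]$, the element $\mathcal{L}_p^{(n)}(E)$ is the reduction of $\mathcal{L}_p(E, T)$ modulo $\omega_n(T) := (1+T)^{p^n} - 1$. Writing $\mathcal{L}_p(E, T) = p^{\mu_E^{\text{an}}} u(T) f(T)$ via Weierstrass preparation, with $f$ distinguished of degree $\lambda_E^{\text{an}}(p)$, the vanishing $\mathcal{L}_p^{(n)}(E) = 0$ becomes the divisibility $\omega_n(T) \mid \mathcal{L}_p(E, T)$ in $\Lambda \otimes \Q_p$, which forces $p^n \leq \lambda_E^{\text{an}}(p)$.

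\emph{Step 3 (uniform bound on $\lambda$).} Thus the proposition reduces to a uniform bound $\lambda_E^{\text{an}}(p) \leq C(N)$ over all odd primes $p$ of good ordinary reduction; one then takes $n_0 := \lceil \log_3 C(N) \rceil + 1$. To establish such a uniform bound, I would use the interpolation formula (\ref{eqn: interpolation-property}) to relate evaluations of $\mathcal{L}_p(E, T)$ at $p$-power twists to twisted L-values $L(E, \bar\chi, 1)/\Omega_E$. Combining Proposition~\ref{prop: period-integrals-estimate}, which provides a $p$-uniform archimedean bound on the modular symbols that appear in $\mathcal{L}_p^{(n)}(E)$, with a Rohrlich-type non-vanishing argument for twists of $p$-power conductor, one should exhibit for every $p$ a non-vanishing twist $\chi$ of conductor $\leq p^{C(N)+1}$, yielding the uniform bound on $\lambda_E^{\text{an}}(p)$.

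\emph{Main obstacle.} Steps 1 and 2 are essentially bookkeeping within the Mazur--Tate--Teitelbaum formalism; the serious work is concentrated in Step 3. Rohrlich's original non-vanishing theorem handles each prime $p$ separately with an implicit constant that a priori depends on $p$, whereas this proposition demands uniformity. I expect the continued-fraction bound of Proposition~\ref{prop: period-integrals-estimate} to be essential precisely for transferring $p$-independent archimedean size estimates into a $p$-uniform bound on $\lambda_E^{\text{an}}(p)$, and this is the step where the argument is most delicate.
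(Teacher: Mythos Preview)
Your Steps 1--2 contain a subtle error. The congruence $\alpha_p\equiv a_p\pmod p$ only yields agreement of the $\alpha_p$- and $a_p$-sums modulo $p$, not the ``if and only if'' for nonvanishing that you assert. The direction you actually need ($\mathcal{L}_p^{(n)}\neq 0 \Rightarrow$ some $a_p$-sum is nonzero) can fail abstractly: writing $X_a=\sum_\eta[\eta a/p^n]$ and $Y_a=\sum_\eta[\eta a/p^{n-1}]$, if $Y_a=a_p X_a$ for all $a$ with some $X_{a_0}\neq 0$, then every $a_p$-sum vanishes while $\alpha_p X_{a_0}-Y_{a_0}=(\alpha_p-a_p)X_{a_0}=-\beta_p X_{a_0}\neq 0$. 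Your justification does not rule this out.

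The more serious gap is Step 3. A uniform bound $\lambda_E^{\mathrm{an}}(p)\le C(N)$ over all good ordinary $p$ is, as far as I know, an open problem, and your sketch does not establish it: exhibiting a single nonvanishing twist of conductor $\le p^{C(N)+1}$ shows only that $\mathcal{L}_p(E,T)$ is nonzero at one point $\zeta-1$, which says nothing about the total number of zeros in the disc and hence nothing about $\lambda$. The paper bypasses both issues by working directly with the $a_p$-sums rather than with $\mathcal{L}_p^{(n)}$. It invokes Chinta's uniform-in-$p$ strengthening of Rohrlich (Theorem~\ref{thm: chinta}): there is $n_0=n_0(N)$ with $L(E,\chi,1)\neq 0$ for every $\chi$ of conductor $p^n$, $n\ge n_0$. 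Then, via Birch's formula and a Gauss-sum identity (Lemma~\ref{sublem}), one computes for $\chi$ of $p$-power order and conductor $p^n$ that
\[
\sum_{a\bmod p^n}\chi(a)\Bigl(a_p\bigl[\tfrac{a}{p^n}\bigr]-\bigl[\tfrac{a}{p^{n-1}}\bigr]\Bigr)=a_p\,g(\overline{\chi})\,\frac{L(E,\chi,1)}{\Omega_E}\neq 0.
\]
Since such $\chi$ is trivial on $\mu_{p-1}$, regrouping the left side immediately produces a residue class $a$ with the required property. Note that Proposition~\ref{prop: period-integrals-estimate} plays no role in this argument; the continued-fraction estimate enters only later, in Section~6.
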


    To prove Proposition~\ref{prop: chinta}, we need two intermediate results. 
    
    \begin{lem}
                \label{sublem}
                 Let $\chi$ be an even primitive character of conductor $p^n$. Then 
                 \begin{enumerate}
                     \item We have 
                     \[ \sum_{a \text{ mod } p^n} \chi(a)  \left[ \dfrac{a}{p^n} \right] = g(\chi) \dfrac{L(E, \overline{\chi}, 1)}{\Omega_E}, \]
                     where $g(\chi)$ is the Gauss sum of $\chi$.
                     \item We have 
                     \[ \sum_{a \text{ mod } p^n} \chi(a)  \left[ \dfrac{a}{p^{n-1}} \right] = 0. \]
                 \end{enumerate}
            \end{lem}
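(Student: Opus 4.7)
The plan is to unravel the definition of $[r]$ as a path integral of $f(z)\,dz$ and apply the standard Birch--Stevens formula expressing twisted $L$-values at $s=1$ in terms of such integrals. Concretely, from the Mellin transform $\Lambda(E,\chi,s) = \int_0^{\infty} f_\chi(iy)\, y^{s-1}\,dy$ together with the Gauss sum expansion $f_\chi(z) = g(\overline\chi)^{-1} \sum_{a \bmod p^n} \overline{\chi}(a)\, f(z + a/p^n)$, one obtains
\[ L(E, \overline{\chi}, 1) = \frac{-2\pi i}{g(\chi)} \sum_{a \bmod p^n} \chi(a) \int_{a/p^n}^{i\infty} f(z)\,dz. \]
For part (1), I will use the evenness of $\chi$ to symmetrize the right-hand side: since $\chi(-a) = \chi(a)$, pairing $a$ with $-a$ replaces each integral $\int_{a/p^n}^{i\infty}$ by $\tfrac{1}{2}\bigl(\int_{a/p^n}^{i\infty} + \int_{-a/p^n}^{i\infty}\bigr)$. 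The latter combination is precisely $\tfrac{i\Omega_E}{\pi}[a/p^n]$ by definition of the modular symbol, and matching constants immediately gives (1) after multiplying through by $g(\chi)$.

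For part (2), the key input is the translation invariance $[r + k] = [r]$ for all integers $k$, which follows from $\Gamma_0(N)$-invariance of the differential $f(z)\,dz$ applied to $\begin{pmatrix} 1 & k \\ 0 & 1 \end{pmatrix}$, combined with the periodicity $f(z+1) = f(z)$. Consequently $[a/p^{n-1}]$ depends only on $a \bmod p^{n-1}$, so I can regroup
\[ \sum_{a \bmod p^n} \chi(a)\, [a/p^{n-1}] = \sum_{b \bmod p^{n-1}} [b/p^{n-1}] \sum_{k=0}^{p-1} \chi(b + k p^{n-1}). \]
Because $\chi$ is primitive of conductor $p^n$, its restriction to the order-$p$ subgroup $1 + p^{n-1}(\Z/p^n\Z)$ of $(\Z/p^n\Z)^\times$ is a nontrivial character. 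For $b$ coprime to $p$, the factorization $\chi(b + kp^{n-1}) = \chi(b)\,\chi(1 + k b^{-1} p^{n-1})$ exhibits the inner sum as a full character sum over this subgroup, which vanishes by orthogonality; for $p \mid b$ (assuming $n \geq 2$) every individual term vanishes since $\chi$ is supported on units modulo $p^n$. The edge case $n=1$ collapses to $[0]\sum_{a \bmod p}\chi(a) = 0$, again by nontriviality of $\chi$.

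No serious obstacle is anticipated in either part: (1) is essentially bookkeeping with Gauss sums once evenness is used to symmetrize, and (2) is a direct orthogonality argument given translation invariance of modular symbols. The only mild subtleties are keeping sign conventions consistent in the Birch--Stevens formula, and treating the $n = 1$ case of (2) separately from the generic case $n \geq 2$.
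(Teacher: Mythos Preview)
Your argument is correct. Part (1) coincides with the paper's: both invoke Birch's formula, and you simply spell out the derivation and the symmetrization via evenness of $\chi$ that the paper leaves implicit in the definition of $[r]$. For part (2) you take a genuinely different route. The paper works at the level of the modular form: it computes $\sum_{a}\chi(a)\,f(z+a/p^{n-1})$ via the $q$-expansion, obtaining the extended Gauss sums $g(mp,\chi)=\overline{\chi}(mp)\,g(\chi)=0$, and then integrates. You instead work directly at the level of the symbol, using the translation invariance $[r+1]=[r]$ to reduce the inner sum to a character sum over the subgroup $1+p^{n-1}(\Z/p^n\Z)$, which vanishes by primitivity. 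Both arguments are short and ultimately encode the same fact (that a primitive character of conductor $p^n$ does not factor through $p^{n-1}$); the paper's version generalizes more readily to other twisted sums of Fourier coefficients, while yours has the virtue of never touching the $q$-expansion and making the role of periodicity of the symbol explicit.
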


            \begin{proof}
                The first claim follows from Birch's formula (see \cite[Formula (8.6)]{mazur-tate-teitelbaum}):
                \[ g(\chi) \, L(E, \overline{\chi}, 1) = \sum_{a \text{ mod }p^n} \chi(a) \left( \int_{a/p^n}^{i\infty} 2\pi i f(z) \, dz \right). \]

                To prove the second claim, for $m \in \Z$, define the extended Gauss sum of $\chi$ as
                \[ g(m, \chi) = \sum_{a \text{ mod }p^n} \chi(a) e^{2\pi i ma / p^n}. \]
    Since $\chi$ is primitive, by \cite[Theorem 9.5]{montgomery-vaughan} we have for all $m \in \Z$: 
                \begin{equation}
                \label{eqn: gauss-sum}
                    g(m, \chi) = \overline{\chi}(m) \cdot g(\chi).
                \end{equation}
                
                Write $f(z) = \sum_{m \geqq 1} a(m) q^m$. An easy calculation shows that
                \begin{align*}
                    \sum_{a \text{ mod }p^n} \chi(a) f \left(z + \frac{a}{p^{n-1}} \right) 
                    &=  \sum_{m \geqq 1} a(m) q^m \left( \sum_{a \text{ mod }p^n} \chi(a) e^{2\pi i ma/p^{n-1}} \right) \\
                    &= \sum_{m \geqq 1} a(m) q^m \cdot g(mp, \chi) \\
                    &= 0,
                \end{align*}  
                where the last equality follows from (\ref{eqn: gauss-sum}). The claim now follows from integrating the above equation along the vertical line from $0$ to $i\infty$.
            \end{proof}

        We need the following deep result of Chinta \cite[Theorem 2]{chinta}.

        \begin{thm}
            \label{thm: chinta}
            Let $E/\Q$ be an elliptic curve of conductor $N$. There is a constant $n_0$ depending only on $N$ such that for all primes $p$ not dividing $N$, if $\chi$ is a Dirichlet character of conductor $p^n$ with $n \geqq n_0$ we have $L(E, \chi, 1) \neq 0$.
        \end{thm}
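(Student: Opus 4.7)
The plan is to prove Chinta's theorem by contradiction, translating vanishing of $L(E,\chi,1)$ into a statement about the Fourier expansion of modular symbols on $(\Z/p^n\Z)^{\times}$, and then using Hecke compatibility to force a contradiction at a level depending only on $N$. By Birch's formula, for a primitive even character $\chi$ of conductor $p^n$,
\[ g(\chi)\, L(E,\overline{\chi},1)/\Omega_E = \sum_{a \bmod p^n} \chi(a) [a/p^n]. \]
So if one assumes $L(E,\chi,1)=0$ for \emph{every} primitive even $\chi$ of conductor $p^n$, Fourier inversion on $(\Z/p^n\Z)^{\times}$ shows that the even function $\phi_n(a) := [a/p^n]+[-a/p^n]$ has no primitive Fourier components at level $n$, and therefore factors through the projection $(\Z/p^n\Z)^{\times} \twoheadrightarrow (\Z/p^{n-1}\Z)^{\times}$.

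Next I would cascade this constraint down the tower using the $U_p$-type distribution relation for modular symbols, which for $a$ coprime to $p$ reads
\[ \sum_{\substack{b \bmod p^n \\ b \equiv a \,(p^{n-1})}} [b/p^n] = a_p [a/p^{n-1}] - [a/p^{n-2}]. \]
If $L(E,\chi,1) = 0$ for every primitive even $\chi$ of conductor $p^m$ throughout a range $[n_0,n]$, then the first step forces $\phi_m$ to descend at each such level $m$. Plugging these descents into the distribution relation at successive levels, one expresses the modular symbols $[a/p^n]$ (for $a$ coprime to $p$) as explicit linear combinations of a bounded ``base layer'' of values at level $p^{n_0-1}$, and the combined descent plus Hecke relations yields a nontrivial linear relation among modular symbols of $f$ at low levels. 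For $n_0$ large enough compared to $N$, this should be incompatible with the known nontriviality of the Manin symbol map for $\Gamma_0(N)$ — e.g.\ with the existence of some $r$ for which $[r] \neq 0$, guaranteed because $f \not\equiv 0$ — producing the desired contradiction.

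The principal obstacle is the uniformity of $n_0$ in $p$. A naive execution of the cascade gives a threshold that depends on $p$ through the dimension of the space of modular symbols of level $Np^n$, which grows with $p$. The key structural point needed is that the iterated constraints live, in the inverse limit over $n$, in a finite-dimensional quotient of $\Gamma_0(N)$-data — essentially indexed by the $\Gamma_0(N)$-equivalence classes of cusps of the tower $X_1(Np^n)$, whose number stabilizes and is controlled by $[\SL_2(\Z):\Gamma_0(N)]$. Isolating this $p$-independent count and converting it into a quantitative $n_0(N)$ is the heart of the argument, and is where Chinta's refinement of Rohrlich's classical nonvanishing method carries the weight. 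A plausible alternative would be a first-moment computation
\[ M_n = \sum_{\chi} g(\chi)\, L(E,\overline{\chi},1)/\Omega_E \]
over primitive even $\chi$ of conductor $p^n$, which collapses via character orthogonality to a short explicit sum of modular symbols $[a/p^n]$; a uniform-in-$p$ lower bound on $|M_n|$, obtained by combining the log-scale upper bound of Proposition~\ref{prop: period-integrals-estimate} with a main-term extraction, would yield the theorem directly.
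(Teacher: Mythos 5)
The paper does not prove this statement; it is quoted directly from \cite[Theorem~2]{chinta}, where it is established by an analytic nonvanishing argument in the tradition of Rohrlich, sharpened so that the threshold $n_0$ is uniform in $p$. So there is no internal proof for your attempt to be compared against. Evaluating the proposal on its own terms, it does not close.

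The algebraic-cascade approach has a structural defect at the outset: to prove the theorem by contradiction one may only assume that a \emph{single} character $\chi$ of conductor $p^n$, $n \geq n_0$, has $L(E,\chi,1)=0$. You instead assume vanishing for \emph{every} primitive even character at \emph{every} level in $[n_0,n]$, a vastly stronger hypothesis which you have no way to reduce to, so the Fourier-inversion step (deducing that $\phi_n$ descends to level $p^{n-1}$) is not available from the actual negation of the theorem. Even granting the stronger hypothesis, the mechanism by which a long enough chain of $U_p$-type descent relations must ``collide'' with the nontriviality of the modular symbol map for $\Gamma_0(N)$ is asserted, not argued: the distribution relation is consistent with many linear configurations of symbols, and nothing in what you wrote forces a contradiction at a level independent of $p$. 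You candidly flag this uniformity as ``the heart of the argument'' and defer it to Chinta — which is to say the proof is missing its main step.

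Your fallback suggestion, a first-moment computation over primitive even $\chi$ of conductor $p^n$, is in fact much closer in spirit to what Chinta actually does (average nonvanishing of twisted $L$-values, building on Rohrlich). But as sketched it also cannot finish with the tools in this paper: orthogonality collapses $M_n$ to a short sum of modular symbols, and Proposition~\ref{prop: period-integrals-estimate} only gives an \emph{upper} bound of size $\log p$ on each symbol. What is needed is a \emph{lower} bound on $|M_n|$, uniform in $p$, and that is precisely the hard analytic input the theorem is outsourcing to \cite{chinta}. In short, you have correctly identified the shape of the needed argument, but the proposal does not contain it.
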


         \begin{proof}[Proof of Proposition~\ref{prop: chinta}] 
            
            Let $\chi$ be a primitive character of $p$-power order and conductor $p^n$ with $n \geqq n_0$, where $n_0$ is as in Theorem~\ref{thm: chinta}. Since $\chi$ has $p$-power order and $p$ is odd, $\chi$ is even. We can then apply Lemma~\ref{sublem} to obtain  
            \[
                \sum_{a \text{ mod } p^n} \chi(a) \left( a_p \left[ \dfrac{a}{p^n} \right] -  \left[ \dfrac{a}{p^{n-1}} \right]  \right) 
                = a_p \, g(\overline{\chi}) \,  \dfrac{L(E, \chi, 1)}{\Omega_E} 
                \neq 0,
            \]
            where the $\neq 0$ follows from Theorem~\ref{thm: chinta}. Since $\chi$ has $p$-power order, $\chi$ is trivial on $\mu_{p-1} \sse \Z_p^{\times}$ when viewed as a map $\chi: \Z_p^{\times} \to \C^{\times}$. Therefore $\chi(a) = \chi(\eta a)$ for all $\eta \in \mu_{p-1}$, so we can rewrite the LHS of the above equation as  
            \[
                \sum_{\substack{a \text{ mod } p^n \\ a \equiv 1 \text{ mod } p}} \chi(a) \left( \sum_{\eta \in \mu_{p-1}} a_p \left[ \dfrac{\eta a}{p^n} \right] -  \left[ \dfrac{\eta a}{p^{n-1}} \right]  \right) 
                = a_p \, g(\overline{\chi}) \,  \dfrac{L(E, \chi, 1)}{\Omega_E} 
                \neq 0.
            \]
            Therefore for each $n \geqq n_0$ there exists a residue class $a$ modulo $p^n$ such that  
            \[  \sum_{\eta \in \mu_{p-1}} a_p \left[ \dfrac{\eta a}{p^n} \right] -  \left[ \dfrac{\eta a}{p^{n-1}} \right] \neq 0. \]
        \end{proof}
        
    \section{Proof of Theorem~\ref{thm: actual-main-thm}}

    We are now ready to prove Theorem~\ref{thm: actual-main-thm}. By Proposition~\ref{prop: period-integrals-estimate} we have 
    \begin{equation}
        \label{eqn: modular_symbol_estimate}
        [r] \ll_N \log (\mathrm{denominator}(r)) + O_N(1) 
    \end{equation}
    for all rational numbers $r$. By Proposition~\ref{prop: chinta}, we can pick a positive integer $n$ depending only on $N$ and a residue class $a$ modulo $p^{n}$ such that 
    \begin{equation}
    \label{eqn4}
        \sum_{\eta \in \mu_{p-1}} a_p \left[ \dfrac{\eta a}{p^{n}} \right] -  \left[ \dfrac{\eta a}{p^{n-1}} \right] \neq 0. 
    \end{equation}    
    By Lemma~\ref{lem: L-value-denominator}, there is a positive integer $\delta_E$ independent of $r$ such that the value
    \[ \delta_E \left( \sum_{\eta \in \mu_{p-1}} a_p \left[ \dfrac{\eta a}{p^{n}} \right] -  \left[ \dfrac{\eta a}{p^{n-1}} \right] \right) \]
    is a rational \textit{integer}. The Hasse-Weil Bound gives us $|a_p| \leqq 2p^{1/2}$. Combining this with (\ref{eqn: modular_symbol_estimate}) we obtain:
    \begin{align*}
        \left| \delta_E \left( \sum_{\eta \in \mu_{p-1}} a_p \left[ \dfrac{\eta a}{p^{n}} \right] -  \left[ \dfrac{\eta a}{p^{n-1}} \right] \right) \right| 
        &\ll_N (p-1) \cdot p^{1/2} \left(\log(p^n) + O_N(1) \right) \\
        &\ll_N p^{3/2} \left(\log(p) + O_N(1) \right) \\
        &\ll_N p^2
    \end{align*}
    where the second last inequality we use the fact that $n$ does not depend on $p$. Therefore for all $p$ sufficiently large we have 
    \begin{equation}
        \label{eqn: key-inequality}
        \left| \delta_E \left( \sum_{\eta \in \mu_{p-1}} a_p \left[ \dfrac{\eta a}{p^{n}} \right] -  \left[ \dfrac{\eta a}{p^{n-1}} \right] \right) \right| < p^2.
    \end{equation}
    Since the LHS of (\ref{eqn: key-inequality}) is a nonzero rational integer, the LHS of (\ref{eqn: key-inequality}) is nonzero mod $p^2$. If we additionally require that $p^2 > \delta_E$, then the quantity $ \sum_{\eta \in \mu_{p-1}} a_p \left[ \frac{\eta a}{p^{n}} \right] -  \left[ \frac{\eta a}{p^{n-1}} \right]$ is also nonzero mod $p^2$. 
    This completes the proof of Theorem~\ref{thm: actual-main-thm} and thus Theorem~\ref{thm: main-thm}.

	\bibliographystyle{amsalpha}
	\bibliography{bibliography}

\providecommand{\bysame}{\leavevmode\hbox to3em{\hrulefill}\thinspace}
\providecommand{\MR}{\relax\ifhmode\unskip\space\fi MR }
\providecommand{\MRhref}[2]{%
  \href{http://www.ams.org/mathscinet-getitem?mr=#1}{#2}
}
\providecommand{\href}[2]{#2}
\begin{thebibliography}{EPW06}

\bibitem[BCS24]{burungale-castella-skinner}
Ashay Burungale, Francesc Castella, and Christopher Skinner, \emph{Base change and {I}wasawa {M}ain {C}onjectures for {$GL_2$}}, arXiv preprint arXiv:2405.00270 (2024).

\bibitem[BS13]{bhargava-shankar}
Manjul Bhargava and Arul Shankar, \emph{The average size of the {$5$}-{S}elmer group of elliptic curves is {$6$}, and the average rank is less than {$1$}}, arXiv preprint arXiv:1312.7859 (2013).

\bibitem[Chi02]{chinta}
Gautam Chinta, \emph{Analytic ranks of elliptic curves over cyclotomic fields}, Journal für die reine und angewandte mathematik (2002), 13--24.

\bibitem[Dri73]{drinfeld}
Vladimir Drinfeld, \emph{Two theorems on modular curves}, Funkcional. Anal. i Prilozen (1973), no.~2, 83--84.

\bibitem[Dri03]{drinen}
Michael~J Drinen, \emph{Iwasawa $\mu$-invariants of elliptic curves and their symmetric powers}, Journal of Number Theory \textbf{102} (2003), no.~2, 191--213.

\bibitem[EPW06]{emerton-pollack-weston}
{M}atthew {E}merton, {R}obert {P}ollack, and {T}om {W}eston, \emph{{V}ariation of {I}wasawa invariants in {H}ida families}, Invent. Math. \textbf{163} (2006), 523–580.

\bibitem[FW79]{fw}
Bruce Ferrero and Lawrence~C Washington, \emph{The {I}wasawa invariant $\mu_p$ vanishes for abelian number fields}, Annals of Mathematics (1979), 377--395.

\bibitem[GL24]{rylan}
Rylan Gajek-Leonard, \emph{A bound on the $\mu$-invariants of supersingular elliptic curves}, arXiv preprint arXiv:2409.18021 (2024).

\bibitem[Gos03]{goswami}
Alok Goswami, \emph{Random continued fractions: a {M}arkov chain approach}, Economic Theory \textbf{23} (2003), no.~1, 85--105.

\bibitem[Gre99]{greenberg}
R.~Greenberg, \emph{Iwasawa theory for elliptic curves}, Arithmetic theory of elliptic curves ({C}etraro, 1997), Lecture Notes in Math., vol. 1716, Springer, Berlin, 1999, pp.~51--144.

\bibitem[GV00]{greenberg-vatsal}
Ralph Greenberg and Vinayak Vatsal, \emph{On the {I}wasawa invariants of elliptic curves}, Inventiones Mathematicae (2000).

\bibitem[Hon76]{honsberger}
Ross Honsberger, \emph{Mathematical {G}ems {II}}, vol.~2, MAA Press, 1976.

\bibitem[Kat04]{kato}
Kazuya Kato, \emph{$p$-adic {H}odge theory and values of zeta functions of modular forms}, Ast{\'e}risque \textbf{295} (2004), 117--290.

\bibitem[Man72]{manin}
Ju~I Manin, \emph{Parabolic points and zeta-functions of modular curves}, Mathematics of the USSR-Izvestiya \textbf{6} (1972), no.~1, 19.

\bibitem[Maz72]{mazur}
Barry Mazur, \emph{Rational points of abelian varieties with values in towers of number fields}, Inventiones mathematicae \textbf{18} (1972), no.~3, 183--266.

\bibitem[MR23]{mazur-rubin}
Barry Mazur and Karl Rubin, \emph{Arithmetic conjectures suggested by the statistical behavior of modular symbols}, Experimental Mathematics \textbf{32} (2023), no.~4, 657--672.

\bibitem[MTT86]{mazur-tate-teitelbaum}
Barry Mazur, John Tate, and Jeremy Teitelbaum, \emph{On $p$-adic analogues of the conjectures of {B}irch and {S}winnerton-{D}yer}, Invent. math \textbf{84} (1986), no.~1, 1--48.

\bibitem[MV07]{montgomery-vaughan}
Hugh~L Montgomery and Robert~C Vaughan, \emph{Multiplicative number theory i: Classical theory}, no.~97, Cambridge university press, 2007.

\bibitem[Ray23]{ray1}
Anwesh Ray, \emph{Remarks on {G}reenberg's conjecture for {G}alois representations associated to elliptic curves}, 2023, arXiv:2308.06673.

\bibitem[Ray24]{ray2}
\bysame, \emph{Statistics for {I}wasawa invariants of elliptic curves, $\rm{III}$}, 2024, arXiv:2404.09009.

\bibitem[Rub91]{rubin-IMC}
Karl Rubin, \emph{The “main conjectures” of {I}wasawa theory for imaginary quadratic fields}, Inventiones mathematicae \textbf{103} (1991), no.~1, 25--68.

\bibitem[Shi76]{shimura}
Goro Shimura, \emph{The {S}pecial {V}alues of the {Z}eta {F}unctions associated with {C}usp {F}orms}, Communications on Pure and Applied Mathematics \textbf{29} (1976), no.~6, 783--804.

\bibitem[Shi77]{shimura2}
\bysame, \emph{On the periods of modular forms}, Mathematische Annalen \textbf{229} (1977), 211--221.

\bibitem[Ste07]{stein}
William~A Stein, \emph{Modular forms, a computational approach}, vol.~79, American Mathematical Soc., 2007.

\bibitem[SU14]{skinner-urban}
Christopher Skinner and Eric Urban, \emph{The iwasawa main conjectures for {$GL_2$}}, Inventiones mathematicae \textbf{195} (2014), 1--277.

\bibitem[Tri05]{mak}
Mak Trifkovi{\'c}, \emph{On the vanishing of $\mu$-invariants of elliptic curves over $\mathbb{Q}$}, Canadian Journal of Mathematics \textbf{57} (2005), no.~4, 812--843.

\bibitem[Was97]{washington}
Lawrence~C Washington, \emph{Introduction to cyclotomic fields}, vol.~83, Springer Science \& Business Media, 1997.

\end{thebibliography}
	
\end{document}